\documentclass[journal,twoside,web]{ieeecolor}

\usepackage{generic}

\usepackage{cite}
\usepackage{amsmath,amssymb,amsfonts}
\usepackage{algorithmic}
\usepackage{graphicx}
\usepackage{algorithm,algorithmic}
\usepackage{hyperref}
\usepackage{graphicx}
\usepackage{multirow}
\usepackage{booktabs}     
\usepackage{tikz}
\usetikzlibrary{arrows.meta}
\hypersetup{hidelinks=true}
\usepackage{textcomp}

\usepackage{etoolbox}

\makeatletter
\def\@IEEEBIOskipN{1\baselineskip}

\patchcmd{\biography}
  {\vskip \@IEEEBIOskipN plus 1fil minus 0\baselineskip}
  {\vskip \@IEEEBIOskipN}
  {}
  {\GenericWarning{}{Biography spacing patch failed}}
\makeatother

\usepackage{amsthm}

\newtheorem{definition}{Definition}
\newtheorem{remark}{Remark}

\newtheorem{theorem}{Theorem}
\newtheorem{lemma}{Lemma}
\newtheorem{corollary}{Corollary}
\newtheorem{proposition}{Proposition}
\newtheorem{assumption}{Assumption}

\def\BibTeX{{\rm B\kern-.05em{\sc i\kern-.025em b}\kern-.08em
    T\kern-.1667em\lower.7ex\hbox{E}\kern-.125emX}}
\begin{document}
\title{A Feasible-Velocity Framework for Local Controllability of Nonlinear Systems with Zero-Excluding Input Constraints}
\author{Amal Bouazza$^{\ast}$, Mohamed Boutayeb, Mustapha Oudani
\thanks{This work was supported by the University of Lorraine (UL) and the International University of Rabat (UIR) under a joint Ph.D. program. The material in this paper will be partially presented 
at The 24th European Control Conference (ECC), July 7-10, 2026, Reykjav\'ik, Iceland}
\thanks{Amal Bouazza and Mohamed Boutayeb are with University of Lorraine, CNRS, CRAN, 54000 Nancy, France and with TICLab, International University of Rabat (UIR), 11100 Rabat, Morocco. (e-mails: \{amal.bouazza,mohamed.boutayeb\}\{@univ-lorraine.fr; @uir.ac.ma\})}
\thanks{Mustapha Oudani is with TICLab, International University of Rabat (UIR), 11100 Rabat, Morocco. (e-mails: mustapha.oudani@uir.ac.ma)}
\thanks{$^{\ast}$ Corresponding author.}}

\maketitle

\begin{abstract}
 This paper studies local controllability of nonlinear control-affine systems subject to state-dependent box constraints that strictly exclude the zero input. Such constraints arise naturally in cable-driven robots and other systems with strictly positive actuation, but fall outside classical small-time local controllability theory and existing frameworks for positive or cone-constrained controls. We introduce the admissible balancing set, an input-space object that classifies reference states without requiring the control distribution to have full rank. When an admissible balancing input lies in the interior of the input set, a locally uniform input shift recovers a symmetric-control system, allowing classical accessibility and small-time local controllability criteria to be applied. When no admissible balancing input exists, the feasible-velocity set is strictly separated from the origin. We show that the resulting separating covector defines a local barrier functional that increases at a uniform positive rate along every admissible trajectory, thereby providing a quantitative obstruction to small-time local controllability. This obstruction does not exclude finite-time reachability through trajectories leaving the barrier neighborhood, which motivates the notion of admissible excursions. The framework is illustrated on an underactuated planar cable-driven parallel robot, for which the barrier is certified numerically over a prescribed state neighborhood.
\end{abstract}

\begin{IEEEkeywords}
Nonlinear control systems, constrained control, small-time local controllability. 
\end{IEEEkeywords}

\section{Introduction}
\label{sec:introduction}
\IEEEPARstart{S}{mall}-time local controllability (STLC) is a central notion in
nonlinear control because it formalizes the ability to generate
arbitrarily small, locally reversible motions in arbitrarily short
time. For a control-affine system
\begin{equation}
	\dot{x}
	=
	f_0(x)+\sum_{i=1}^{m}u_i g_i(x),
	\qquad
	u\in\Omega(x),
	\label{eq:intro-system}
\end{equation}
the state is \(x\in\mathbb{R}^{n}\), where \(n\) is the state-space
dimension, and \(m\) is the number of scalar control inputs. Classical
geometric controllability results generally rely, after centering the
system at a controlled equilibrium, on an admissible control set that
contains the origin in its interior. This local symmetry permits
positive and negative control variations and underlies Lie-bracket
constructions, accessibility results, and sufficient conditions for
STLC developed by Sussmann, Krener, and Hermes \cite{sussmann1987general,krener1974generalization,hermes1982local}.

This interior-point assumption is violated in many systems with
unilateral or strictly positive actuation. Brammer \cite{brammer1972controllability}
established necessary and sufficient null-controllability conditions
for autonomous linear systems with positive controllers without
assuming that the origin is an interior point of the control set. His
analysis is based on the spectral structure of the linear drift and on
separation properties of time-integrated reachable sets. More
recently, Caillau, Dell'Elce, Herasimenka, and Pomet
\cite{caillau2025controllability,herasimenka2022controllability,herasimenka2023controllability} developed complementary controllability
conditions for nonlinear systems in which the convexified control set
contains the origin but need not contain a neighborhood of it. In the
cone-constrained solar-sail setting motivating those works, the origin
is a boundary point of the admissible force cone. Controllability is
then recovered through averaging or pushforwards along the drift flow,
with periodicity or fast oscillations playing a decisive role.

The present paper addresses a different, complementary geometry:
\begin{equation}
	0\notin\Omega(x).
	\label{eq:zero-excluding}
\end{equation}
Thus, the zero control is not admissible, even as a boundary value.
This distinction is structural. In the frameworks of
\cite{caillau2025controllability,herasimenka2022controllability,herasimenka2023controllability}, the zero-input drift motion provides the
reference evolution along which controlled directions can be
transported or averaged. Under \eqref{eq:zero-excluding}, that
reference motion is itself inadmissible. Likewise, Brammer's linear
theory \cite{brammer1972controllability} does not directly cover nonlinear dynamics,
state-dependent input boxes, or local controllability about a
reference state that need not initially be known to be an admissible
controlled equilibrium. The objective here is therefore not to
replace these theories, but to complete the geometric picture for
nonlinear systems whose admissible inputs remain strictly separated
from zero.

Strict zero exclusion arises naturally in cable-driven parallel
robots (CDPRs). A cable can pull but cannot push, and a vanishing
tension causes slackness and loss of actuation; consequently, each
cable tension must satisfy a strictly positive lower bound. Similar
constraints occur in systems driven by one-sided forces, pressure
differences, or actuators that cease to transmit effort at zero.
For a mechanical system with \(k\) configuration degrees of freedom,
the first-order state is typically \(x=(q,\dot q)\in\mathbb{R}^{n}\)
with \(n=2k\). The system is mechanically underactuated when the
number of independent actuators is smaller than \(k\), whereas
redundant or overactuation corresponds to having more actuator
channels than configuration degrees of freedom, subject to the rank
of the actuation map. This mechanical terminology should be
distinguished from the ambient state-space rank condition used below.

Let \(x_e\in\mathbb{R}^{n}\) denote the reference state about which
local controllability is investigated. No admissible equilibrium
assumption is imposed a priori at \(x_e\). Define
\begin{equation}
	G(x)
	:=
	\begin{bmatrix}
		g_1(x)&\cdots&g_m(x)
	\end{bmatrix}
\end{equation}
and the instantaneous feasible-velocity set
\begin{equation}
	V(x)
	:=
	\left\{
	f_0(x)+G(x)u:\;u\in\Omega(x)
	\right\}
	\subset\mathbb{R}^{n}.
	\label{eq:intro-feasible-velocity}
\end{equation}
The condition \(0\in\operatorname{int}V(x_e)\), where the interior is
taken in the ambient state space \(\mathbb{R}^{n}\), expresses a
favorable instantaneous geometry: the drift can be balanced and
feasible velocities surround the origin in every state-space
direction. However, this condition is meaningful only when the
control distribution has full row rank. Indeed, if
\(\operatorname{rank}G(x_e)<n\), then \(V(x_e)\) is contained in the
proper affine subspace
\[
f_0(x_e)+\operatorname{Im}G(x_e)
\]
and therefore has empty interior in \(\mathbb{R}^{n}\). In
particular, this issue affects underactuated systems, but it is not
limited to them: for a first-order representation of a mechanical
system, the control vector fields generally act only on the
acceleration block, so \(\operatorname{rank}G(x_e)<n=2k\) may hold
even when the mechanical system is fully or redundantly actuated.
Hence, actuator counting alone cannot determine whether
\(0\in\operatorname{int}V(x_e)\); the decisive quantity is the rank
of \(G(x_e)\) relative to the state dimension \(n\).

To obtain a classification that remains informative without any rank
assumption, we introduce the admissible balancing set
\begin{equation}
	\mathcal{B}(x_e)
	:=
	\left\{
	u\in\Omega(x_e):
	f_0(x_e)+G(x_e)u=0
	\right\}.
	\label{eq:intro-balancing-set}
\end{equation}
This input-space object separates three geometrically distinct
regimes. If
\(\mathcal{B}(x_e)\cap\operatorname{int}\Omega(x_e)\neq\varnothing\),
an interior admissible input balances the drift and a local input
shift restores symmetric control variations. If
\(\mathcal{B}(x_e)=\varnothing\), no admissible control balances the
drift and the origin lies outside \(V(x_e)\), producing a one-sided
first-order obstruction. The remaining boundary-balancing case,
\(\mathcal{B}(x_e)\neq\varnothing\) but
\(\mathcal{B}(x_e)\cap\operatorname{int}\Omega(x_e)=\varnothing\),
leads after shifting to a tangent-cone constraint and requires a
separate asymmetric analysis. When
\(\operatorname{rank}G(x_e)=n\), this classification reduces exactly
to the position of the origin in the interior, exterior, or boundary
of \(V(x_e)\). When \(\operatorname{rank}G(x_e)<n\), it remains
operational although \(\operatorname{int}V(x_e)=\varnothing\).

The contributions of this paper are fourfold. First, we introduce the admissible balancing set
\(\mathcal B(x_e)\) and use it to classify reference states under
state-dependent zero-excluding input constraints. The classification
requires no rank assumption and coincides with the interior,
boundary, and exterior feasible-velocity regimes whenever
\(\operatorname{rank}G(x_e)=n\).

Second, in the interior-balancing regime, we prove that continuity of
the input bounds makes a constant balancing-input shift uniformly
admissible on a state neighborhood. This reduction transfers
Krener's accessibility result and Sussmann's
\(\mathcal S(\theta)\)-criterion to the original constrained system.

Third, in the no-balancing regime, we establish a quantitative
first-order obstruction to STLC. Projection of the origin onto
\(V(x_e)\), followed by local uniformization, produces a covector
\(\lambda\) and a rate \(\alpha'>0\) such that
\[
\langle\lambda,x(t)-x_e\rangle\geq\alpha't
\]
for every admissible trajectory that remains near \(x_e\).

Fourth, we specialize the framework to a planar underactuated CDPR.
The balancing-set classification partitions its static operating
domain, while an interval branch-and-bound computation certifies the
no-balancing barrier on a continuous neighborhood of a representative
configuration. Extremal, adversarial, and randomly switched
trajectory simulations illustrate the certificate.

\section{Related Work}
\label{sec:related-work}

Classical nonlinear controllability theory characterizes accessibility
and STLC through Lie-algebraic constructions and signed local control
variations \cite{sussmann1972controllability,hermann1977nonlinear,
krener1974generalization,hermes1982local,sussmann1987general}.
Subsequent obstruction analyses
\cite{jafarpour2020small,gherdaoui2025quadratic,
beauchard2026unified} retain this local symmetric-control setting.
The present work instead considers state-dependent input boxes that
strictly exclude zero and reference states that need not initially be
known to be controlled equilibria.

For one-sided inputs, Brammer established a complete
null-controllability characterization for autonomous linear systems
with positive controllers \cite{brammer1972controllability}; related
linear developments include
\cite{saperstone1971controllability,saperstone1973global,
heymann1975controllability,pachter1978control}. Related work includes
constrained STLC for linear systems \cite{krastanov2008constrained}
and output regulation under actuator saturation by Lin, Stoorvogel,
and Saberi \cite{lin1996output}. These saturation settings typically
retain zero as an admissible input, unlike the present framework.
Nonlinear constrained controllability has also been studied for
unilateral or cone-valued inputs
\cite{goodwine1996controllability,klamka1996constrained}.
Most directly related are the results of Caillau, Dell'Elce,
Herasimenka, and Pomet
\cite{caillau2025controllability,herasimenka2022controllability,
herasimenka2023controllability}, where the convexified control set
contains the origin as a boundary point and controllability is
recovered through drift-flow pushforwards or averaging. Here zero
itself is inadmissible, so the drift trajectory cannot serve as an
admissible reference motion. Our balancing-set classification and
feasible-velocity separation therefore address a complementary
geometry.

For infinite-dimensional systems, positivity-constrained
controllability of fractional parabolic equations can exhibit a
strictly positive minimal time \cite{biccari2020fractional}.
Extending feasible-velocity separation to such systems is an open
direction.

Cable-driven parallel robots provide a representative application
because cable tensions are strictly positive and underactuated
configurations have rank-deficient control distributions. Existing
work has primarily addressed stability, trajectory planning, tension
distribution, and tracking
\cite{carricato2013stability,ida2019,gouttefarde2015versatile,
pott2013cable,bettega2023model}. A systematic local controllability
analysis combining strict-positive tensions, state-dependent bounds,
and mechanical underactuation remains limited. The present framework
fills this gap and uses standard tools from set-valued and convex
analysis \cite{aubin1990set,rockafellar1998variational,
rockafellar1997convex} to turn pointwise feasible-velocity separation
into a uniform trajectory barrier.

\section{System Definition}
\label{sec:system-definition}

Let $\nu \in \mathbb{Z}_{\geq 1} \cup \{\infty,\omega\}$ denote a
regularity class. We consider a $C^\nu$ control-affine system
defined by the family
$$    \mathcal{F}:=\{f_0,g_1,\ldots,g_m\}
$$
of vector fields on $\mathbb{R}^n$, with dynamics
\begin{equation}
	\label{eq:system_dynamique}
	\dot x(t)
	=
	f_0(x(t))+\sum_{i=1}^m u_i(t)g_i(x(t)),
	\quad
	u(t)\in \Omega(x(t)),
\end{equation}
where
$$  \Omega(x):=
\prod_{i=1}^m
[\underline u_i(x),\overline u_i(x)]
\subset \mathbb{R}^m $$
is a state-dependent box.

\begin{assumption}
	\label{ass:1}
	The bound functions
	$\underline u_i,\overline u_i:\mathbb{R}^n\to\mathbb{R}$
	are continuous and satisfy
	\[
	\underline u_i(x)<\overline u_i(x),
	\qquad
	x\in\mathbb{R}^n,\quad i=1,\ldots,m.
	\]
\end{assumption}

The central feature of the framework is the strict exclusion of the
zero control. We assume either
\[
0<\underline u_i(x)<\overline u_i(x)
\qquad
\text{for all }x,i,
\]
or
\[
\underline u_i(x)<\overline u_i(x)<0
\qquad
\text{for all }x,i.
\]
Thus \(0\notin\Omega(x)\) for every \(x\). This models actuators that
lose their functional role at zero input, such as cables that become
slack when their tension vanishes. Zero exclusion is not by itself an obstruction to controllability: a
nonzero admissible input may balance the drift; the obstruction appears
only when the induced feasible velocity set has a one-sided geometry,
as made precise below. This pointwise zero exclusion is uniform on
compact subsets of the state space, as a direct consequence of the
continuity of the input bounds.

\begin{lemma}[Compact-uniform zero exclusion]
	\label{lem:compact-zero-exclusion}
	Under Assumption~\ref{ass:1} and either sign-definite input-domain
	condition, for every compact set $K\subset\mathbb{R}^n$ there exists
	$\delta_K>0$ such that
	\[
	\operatorname{dist}(0,\Omega(x))\geq \delta_K,
	\qquad x\in K.
	\]
\end{lemma}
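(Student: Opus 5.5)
The plan is to reduce the distance to the box to a coordinatewise quantity and then use continuity together with compactness. Consider first the positive case $0<\underline u_i(x)<\overline u_i(x)$. For any $u\in\Omega(x)$ we have $u_i\geq\underline u_i(x)>0$ for every $i$. Hence, using only the first coordinate,
\[
\|u\|\geq |u_1|\geq \underline u_1(x).
\]
This gives $\operatorname{dist}(0,\Omega(x))\geq \underline u_1(x)$. (In fact the distance equals $\bigl(\sum_i \underline u_i(x)^2\bigr)^{1/2}$ in the Euclidean norm, attained at the corner $\underline u(x)$, but the one-coordinate bound is all we need.) In the negative case $\underline u_i(x)<\overline u_i(x)<0$, every $u\in\Omega(x)$ satisfies $|u_i|\geq|\overline u_i(x)|=-\overline u_i(x)>0$. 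This yields $\operatorname{dist}(0,\Omega(x))\geq -\overline u_1(x)$.

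Next we define $\varphi(x):=\underline u_1(x)$ in the positive case and $\varphi(x):=-\overline u_1(x)$ in the negative case. By Assumption~\ref{ass:1}, $\varphi$ is continuous on $\mathbb{R}^n$. By the sign-definite condition, $\varphi(x)>0$ for every $x$. Given a compact $K$, the extreme value theorem provides a point $x^\ast\in K$ with $\varphi(x^\ast)=\min_K\varphi$. We then set $\delta_K:=\varphi(x^\ast)>0$, and the bound from the previous paragraph gives $\operatorname{dist}(0,\Omega(x))\geq\varphi(x)\geq\delta_K$ for all $x\in K$. If $K=\varnothing$, the claim is vacuous and any $\delta_K>0$ works.

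No step is a genuine obstacle. The only point requiring care is that pointwise positivity does not by itself give a uniform bound: the infimum could degenerate to $0$ on a noncompact set. It is exactly compactness of $K$, combined with continuity of the bound functions, that makes the minimum attained and therefore strictly positive. The argument is norm-independent up to constants, since $|u_1|\leq\|u\|$ holds in any $\ell^p$ norm.
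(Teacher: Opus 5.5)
Your proof is correct and follows essentially the paper's argument: a coordinatewise lower bound $|u_i|\geq\psi_i(x)$, with $\psi_i$ continuous and strictly positive, combined with compactness of $K$ to obtain a positive minimum. The only difference is that you use a single coordinate, whereas the paper sets $\delta_K=\min_i\min_K\psi_i$ over all coordinates; both give $\|u\|\geq\delta_K$ for every $u\in\Omega(x)$, $x\in K$.
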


\begin{proof}
	In the positive case, define
	\(\psi_i(x):=\underline u_i(x)\). In the negative case, define
	\(\psi_i(x):=-\overline u_i(x)\). In both cases, \(\psi_i\) is
	continuous and strictly positive on \(K\). Hence, by compactness,
	\[
	\delta_i:=\min_{x\in K}\psi_i(x)>0.
	\]
	Set \(\delta_K:=\min_i\delta_i>0\). For every \(x\in K\) and every
	\(u\in\Omega(x)\), one has \(|u_i|\geq \psi_i(x)\geq \delta_K\) for
	each \(i\). Therefore \(\|u\|\geq \delta_K\), and the conclusion
	follows.
\end{proof}

We now define admissible controls and trajectories under the
state-dependent constraint. For \(T>0\) and a trajectory
\(x(\cdot)\) defined on \([0,T]\), the set of admissible controls
along \(x(\cdot)\) is
\begin{align}
	\mathcal{U}_{\mathrm{ad}}\big(x(\cdot);0,T\big)
	:=
	\Big\{
	u\in L^\infty(0,T;&\mathbb{R}^m):
	\ u(t)\in\Omega(x(t))
	\nonumber\\
	&\text{for a.e. }t\in(0,T)
	\Big\}.
	\label{eq:admissible-controls-general}
\end{align}
A trajectory is an absolutely continuous map
\(x:[0,T]\to\mathbb{R}^n\) satisfying
\eqref{eq:system_dynamique} almost everywhere with
\(u\in\mathcal{U}_{\mathrm{ad}}(x(\cdot);0,T)\). Existence of such
trajectories is standard through the differential-inclusion
interpretation of~\eqref{eq:system_dynamique}
\cite{filippov2013differential,aubin2008differential}.
\subsection{Feasible Velocity Set and Balancing Inputs}

For a given state \(x\in\mathbb{R}^n\), define
$$G(x):=[g_1(x)\ \cdots\ g_m(x)]\in\mathbb{R}^{n\times m}. $$
The \emph{feasible velocity set} at \(x\) is
\begin{equation}
	\label{eq:velocity_set}
	V(x)
	:=
	\left\{
	f_0(x)+\sum_{i=1}^m u_i g_i(x):
	u\in\Omega(x)
	\right\}.
\end{equation}
Equivalently,
$$V(x)=f_0(x)+G(x)\Omega(x).$$
Thus \(V(x)\) is the set of all instantaneous velocities that can be
generated at \(x\) by admissible inputs. The position of the origin
relative to \(V(x)\) determines whether the state can be held fixed by
some admissible input and whether a first-order one-sided obstruction
is present.

Throughout the paper, \(x_e\in\mathbb{R}^n\) denotes a reference
state at which local controllability properties are analyzed. This
reference state is not assumed a priori to be an admissible controlled
equilibrium. It may fail to be equilibrable, or it may be equilibrable
only by an input that is not admissible under the constraint
\(\Omega(x_e)\). We therefore distinguish between algebraic balancing
inputs, which solve the balance equation in \(\mathbb{R}^m\), and
admissible balancing inputs, which also satisfy the input constraint.

\begin{definition}[Controlled equilibrium]
	\label{def:controlled-equilibrium}
	A state \(x_e\in\mathbb{R}^n\) is a \emph{controlled equilibrium} of
	\((\mathcal{F},\Omega)\) if there exists \(u_e\in\Omega(x_e)\) such
	that
	\begin{equation}
		f_0(x_e)+\sum_{i=1}^m u_{e,i}g_i(x_e)=0.
	\end{equation}
	Such a \(u_e\) is called a \emph{balancing input} at \(x_e\).
\end{definition}

\begin{definition}[Balancing inputs and admissible balancing set]
	\label{def:balancing-set}
	For a reference state \(x_e\in\mathbb{R}^n\), define:
	\begin{itemize}
		\item the \emph{unconstrained balancing set}
		\begin{equation}
			\mathcal{B}_e(x_e):=
			\left\{
			u\in\mathbb{R}^m:
			f_0(x_e)+\sum_{i=1}^m u_i g_i(x_e)=0
			\right\};
		\end{equation}
		\item the \emph{admissible balancing set}
		\begin{equation}
			\mathcal{B}(x_e)
			:=
			\mathcal{B}_e(x_e)\cap\Omega(x_e).
		\end{equation}
	\end{itemize}
\end{definition}

The pair \((\mathcal{B}_e(x_e),\mathcal{B}(x_e))\) refines the
classification of \(x_e\) into physically meaningful categories. If
\(\mathcal{B}(x_e)\neq\emptyset\), then \(x_e\) is an admissible
controlled equilibrium: the system can be held at \(x_e\) by an
admissible constant input. If
\(\mathcal{B}_e(x_e)\neq\emptyset\) but
\(\mathcal{B}(x_e)=\emptyset\), then the balance equation is solvable
in \(\mathbb{R}^m\) but not inside the admissible input set
\(\Omega(x_e)\). In this case, \(x_e\) is a \emph{virtual balance
	point}: it is mathematically equilibrable but physically
unrealizable, for instance a CDPR configuration requiring negative
cable tensions. Finally, if \(\mathcal{B}_e(x_e)=\emptyset\), no input
in \(\mathbb{R}^m\) can balance the dynamics; equivalently,
\begin{equation}
	f_0(x_e)\notin \operatorname{Im}G(x_e).
\end{equation}

The controllability analysis in Sections~\ref{sec:case-A} and
\ref{sec:case-B} focuses on two regimes treated in the present paper.
The first is the interior-balancing regime
\begin{equation}\label{eq:capnonempty}
	\mathcal{B}(x_e)\cap\operatorname{int}\Omega(x_e)\neq\emptyset,
\end{equation}
where an admissible input shift is available. The second is the
no-balancing regime
\begin{equation}
	\mathcal{B}(x_e)=\emptyset,
\end{equation}
where no admissible input can balance the drift. The finer distinction
provided by \(\mathcal{B}_e(x_e)\) becomes relevant in applications
(Section~\ref{sec:application}). By definition,
\begin{equation}
	\mathcal{B}(x_e)\neq\emptyset
	\quad\Longleftrightarrow\quad
	0\in V(x_e),
\end{equation}
which links the input-space classification to the geometry of the
feasible velocity set. In the linear specialization \(x_e=0\), with
\(f_0(x)=Ax\) and \(g_i(x)=b_i\), this condition recovers Brammer's
hypothesis
\[
\Omega\cap\ker B_{\rm lin}\neq\emptyset
\]
for the linear system
\[
\dot x=Ax+B_{\rm lin}u,
\]
where \(B_{\rm lin}:=[b_1\ \cdots\ b_m]\)
\cite[(1.3)]{brammer1972controllability}. The notation
\(B_{\rm lin}\) is used here only to avoid confusion with the
admissible balancing set \(\mathcal{B}(x_e)\).

The controllability analysis relies on two structural properties of
\(V(\cdot)\): pointwise compactness and convexity, used for separating
hyperplanes when \(0\notin V(x_e)\) in Section~\ref{sec:case-B}, and
Hausdorff continuity, used for local uniformization in
Sections~\ref{sec:case-A} and~\ref{sec:case-B}.

\begin{lemma}[Properties of the feasible velocity set]
	\label{lem:V-properties}
	Under Assumption~\ref{ass:1}, the feasible velocity set \(V(\cdot)\)
	defined in~\eqref{eq:velocity_set} satisfies:
	\begin{itemize}
		\item[$i)$] for every \(x\in\mathbb{R}^n\), \(V(x)\) is a compact
		convex subset of \(\mathbb{R}^n\);
		\item[$ii)$] the set-valued map \(x\mapsto V(x)\) is continuous in
		the Hausdorff sense on \(\mathbb{R}^n\).
	\end{itemize}
\end{lemma}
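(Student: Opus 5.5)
Part $i)$ follows from the structure of $V(x)$ as an affine image. For fixed $x$, $\Omega(x)$ is a product of compact intervals $[\underline u_i(x),\overline u_i(x)]$, which are nonempty by Assumption~\ref{ass:1}; hence $\Omega(x)$ is a nonempty compact convex box in $\mathbb{R}^m$. The map $u\mapsto f_0(x)+G(x)u$ is affine and continuous. Continuous maps send compact sets to compact sets, and affine maps send convex sets to convex sets. Therefore $V(x)=f_0(x)+G(x)\Omega(x)$ is compact and convex.

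For part $ii)$, I will bound the Hausdorff distance between $V(x)$ and $V(y)$ for $y$ near $x$ by comparing the two parametrizations. The plan has three steps.

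\emph{Step 1 (continuity of the box).} Show that $x\mapsto\Omega(x)$ is Hausdorff continuous. Define the coordinatewise clipping map
\[
\pi_y(u)_i:=\min\{\max\{u_i,\underline u_i(y)\},\overline u_i(y)\},
\]
which maps $\Omega(x)$ into $\Omega(y)$. It satisfies
\[
|u_i-\pi_y(u)_i|\le \max\{|\underline u_i(x)-\underline u_i(y)|,\ |\overline u_i(x)-\overline u_i(y)|\}.
\]
Consequently
\[
d_H(\Omega(x),\Omega(y))\le \varepsilon(x,y):=\Big(\sum_i \max\{|\underline u_i(x)-\underline u_i(y)|,|\overline u_i(x)-\overline u_i(y)|\}^2\Big)^{1/2},
\]
and $\varepsilon(x,y)\to 0$ as $y\to x$ by continuity of the bounds.

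\emph{Step 2 (transfer through the affine map).} Fix $x$ and restrict $y$ to the closed unit ball $K$ around $x$. By continuity of the bounds, $M:=\sup_{y\in K}\max_{u\in\Omega(y)}\|u\|<\infty$. Given $v=f_0(x)+G(x)u\in V(x)$, pick $u':=\pi_y(u)\in\Omega(y)$ and set $w:=f_0(y)+G(y)u'\in V(y)$. Then
\[
\|v-w\|\le \|f_0(x)-f_0(y)\|+\|G(x)-G(y)\|\,M+\|G(y)\|\,\varepsilon(x,y).
\]
By symmetry (clipping in the other direction), the same bound controls the distance from any point of $V(y)$ to $V(x)$. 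Since $f_0$ and $G$ are continuous (they are $C^\nu$ with $\nu\ge 1$) and $\|G\|$ is bounded on $K$, the right-hand side tends to $0$ as $y\to x$. This gives $d_H(V(x),V(y))\to 0$.

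\emph{Main obstacle.} The only delicate point is the uniform bounds: $M$, $\sup_K\|G\|$, and the explicit matching map between the two boxes. The clipping projection supplies the matching map directly. The suprema are finite by compactness of $K$ together with continuity of $f_0$, $G$, and the bound functions. No Lipschitz assumption is needed, because only continuity (not a modulus) of the bounds is claimed.
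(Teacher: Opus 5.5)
Your proof is correct and follows the same route as the paper. Part $i)$ is obtained as the affine image of a compact convex box, and part $ii)$ first shows Hausdorff continuity of $\Omega(\cdot)$ via a coordinatewise estimate (your clipping bound is a slightly sharper form of the paper's) and then transfers it through $(x,u)\mapsto f_0(x)+G(x)u$. The only difference is that the paper cites a standard continuity theorem for compact-valued images, whereas you prove this transfer directly using the clipping map and the affine dependence on $u$. This gives the explicit bound $d_H(V(x),V(y))\le\|f_0(x)-f_0(y)\|+M\|G(x)-G(y)\|+\sup_K\|G\|\,\varepsilon(x,y)$; in the reverse direction the factor is $\|G(x)\|$ rather than $\|G(y)\|$, which is harmless since both are bounded by $\sup_K\|G\|$.
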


\begin{proof}
	\(i)\) For fixed \(x\), the map
	\[
	\phi_x:u\mapsto f_0(x)+\sum_{i=1}^m u_i g_i(x)
	\]
	is affine in \(u\). The input set \(\Omega(x)\) is a Cartesian product
	of compact intervals and is therefore compact and convex. Since the
	image of a compact convex set under a continuous affine map is compact
	and convex,
	\[
	V(x)=\phi_x(\Omega(x))
	\]
	is compact and convex.
	
	\smallskip
	\(ii)\) We prove Hausdorff continuity in two steps.
	
	First, \(\Omega(\cdot)\) is Hausdorff continuous. Indeed, for
	\(x,x_0\in\mathbb{R}^n\), the sets \(\Omega(x)\) and \(\Omega(x_0)\)
	are boxes whose endpoints depend continuously on \(x\). A
	coordinate-wise estimate gives
	\begin{align*}
		d_H\big(\Omega(x),\Omega(x_0)\big)
		\leq
		\sum_{i=1}^m
		\big|\underline u_i(x)-\underline u_i(x_0)\big|
		+
		\big|\overline u_i(x)-\overline u_i(x_0)\big|,
	\end{align*}
	where \(d_H\) denotes the standard Hausdorff distance. The
	right-hand side tends to zero as
	\(x\to x_0\), by continuity of the bound functions.
	
	Second, the map
	\[
	\phi(x,u):=f_0(x)+\sum_{i=1}^m u_i g_i(x)
	\]
	is jointly continuous in \((x,u)\). Since \(\Omega(\cdot)\) is
	Hausdorff continuous with non-empty compact values, the standard
	continuity theorem for compact-valued images implies that
	\[
	V(x)=\phi(\{x\}\times\Omega(x))
	\]
	is Hausdorff continuous with non-empty compact values.
\end{proof}

\subsection{Reachability, Accessibility, and STLC}

For \(x_e\in\mathbb{R}^n\), \(T>0\), and an open neighborhood
\(\mathcal{N}_{x_e}\) of \(x_e\), the constrained reachable set inside
\(\mathcal{N}_{x_e}\) is defined by
\begin{align}
	\mathcal{R}_{\mathcal{F}}^\Omega&(\leq T,x_e;\mathcal{N}_{x_e})
	:=
	\{x_e\}
	\cup
	\Big\{
	x(\tau):
	\ 0<\tau\leq T,
	\nonumber\\
	&\ x:[0,\tau]\to\mathcal{N}_{x_e}
	\text{ admissible},
	\quad x(0)=x_e
	\Big\}.
	\label{eq:local-reachable-set}
\end{align}
The point \(x_e\) is included explicitly to account for reachability
at time zero. Under zero-excluding constraints, \(x_e\) need not be
reachable again at any positive time, and forward and backward
reachability are generally not symmetric.

We write
\[
\mathcal{R}_{\mathcal{F}}^\Omega(\leq T,x_e)
:=
\mathcal{R}_{\mathcal{F}}^\Omega(\leq T,x_e;\mathbb{R}^n).
\]

We recall the standard notions of local accessibility, LARC, and STLC
in the constrained setting. These notions are classical in geometric
nonlinear control~\cite{hermann1977nonlinear,krener1974generalization,
	sussmann1987general,coron2007control}; here, reachability is always
understood with respect to the admissible state-dependent constraint
\(\Omega(x)\).

\begin{definition}[Local accessibility under \(\Omega\)]
	\label{def:local-accessibility}
	The system \((\mathcal{F},\Omega)\) is \emph{locally accessible from
		\(x_e\) under \(\Omega\)} if there exists an open neighborhood
	\(\mathcal{N}_{x_e}\) of \(x_e\) such that
	\[
	\mathcal{R}_{\mathcal{F}}^\Omega(\leq T,x_e;\mathcal{N}_{x_e})
	\]
	has non-empty interior in \(\mathbb{R}^n\) for every \(T>0\).
\end{definition}

For a family
\(\mathcal{F}=\{f_0,g_1,\ldots,g_m\}\) of sufficiently smooth vector
fields on \(\mathbb{R}^n\), the Lie algebra generated by
\(\mathcal{F}\), denoted \(\mathcal{L}(\mathcal{F})\), is the smallest
Lie subalgebra of the space of vector fields that contains
\(\mathcal{F}\) and is closed under the Lie bracket
\([\cdot,\cdot]\). Equivalently, \(\mathcal{L}(\mathcal{F})\) contains
the original vector fields \(f_0,g_1,\ldots,g_m\) together with all
iterated Lie brackets that can be formed from them. For
\(x_e\in\mathbb{R}^n\), the evaluation
\(\mathcal{L}(\mathcal{F})(x_e)\subseteq\mathbb{R}^n\) is the linear
subspace generated by the values at \(x_e\) of all vector fields in
\(\mathcal{L}(\mathcal{F})\).

\begin{definition}[Lie Algebra Rank Condition]
	\label{def:larc}
	The system \((\mathcal{F},\Omega)\) satisfies the
	\emph{Lie Algebra Rank Condition (LARC)} at \(x_e\) if
	\[
	\dim\mathcal{L}(\mathcal{F})(x_e)=n.
	\]
\end{definition}

LARC is the classical infinitesimal criterion for local accessibility
when the admissible control set contains a symmetric neighborhood of
the origin~\cite{krener1974generalization,hermann1977nonlinear}. In
the present zero-excluding setting, this hypothesis is not available
directly. When the interior-balancing condition
\eqref{eq:capnonempty} holds, one can perform the shift
\(u=u_e+\eta\), which transforms the system locally into one with a
symmetric control set for the shifted input \(\eta\). The shift itself
is classical; the point here is to identify, through the admissible
balancing set, when this reduction is valid under state-dependent
zero-excluding constraints. This reduction is carried out in
Section~\ref{sec:case-A}.

The Lie-algebraic nature of LARC makes it the appropriate
accessibility criterion for underactuated systems \((m<n)\). Although
the direct control directions \(g_1(x_e),\ldots,g_m(x_e)\) cannot span
\(\mathbb{R}^n\) by themselves when \(m<n\), iterated Lie brackets
involving \(f_0,g_1,\ldots,g_m\) may generate the missing directions.
Thus no criterion based only on \(\operatorname{rank}G(x_e)\) can
capture local accessibility in such systems.

\begin{definition}[STLC under \(\Omega\)]
	\label{def:stlc}
	The system \((\mathcal{F},\Omega)\) is \emph{small-time locally
		controllable} (STLC) at \(x_e\) if there exists an open neighborhood
	\(\mathcal{N}_{x_e}\) of \(x_e\) such that, for every
	\(\varepsilon>0\), there exists \(r(\varepsilon)>0\) satisfying
	\[
	B_{r(\varepsilon)}(x_e)
	\subseteq
	\mathcal{R}_{\mathcal{F}}^\Omega
	(\leq \varepsilon,x_e;\mathcal{N}_{x_e}).
	\]
\end{definition}
The following first-order condition records the absence of a
linear separating covector for the local reachable set. It will be
useful for interpreting the obstruction obtained in the no-balancing
regime.

\begin{definition}[Convexified first-order STLC condition]
	\label{def:first-order-stlc}
	The system \((\mathcal{F},\Omega)\) satisfies the
	\emph{convexified first-order STLC condition} at \(x_e\) if, for every
	open neighborhood \(\mathcal{N}_{x_e}\) of \(x_e\) and every \(T>0\),
	\[
	\operatorname{co}\,
	T_{\mathcal{R}_{\mathcal{F}}^\Omega
		(\leq T,x_e;\mathcal{N}_{x_e})}(x_e)
	=
	\mathbb{R}^n,
	\]
	where \(T_S(x_e)\) denotes the Bouligand tangent cone to a set
	\(S\ni x_e\):
	\[
	T_S(x_e)
	:=
	\left\{
	v\in\mathbb{R}^n:
	\exists\, t_k\to 0^+,\ \exists\, x_k\in S,
	\frac{x_k-x_e}{t_k}\to v
	\right\}.
	\]
	Equivalently, for every such \(\mathcal{N}_{x_e}\) and \(T\), there is
	no nonzero covector \(\lambda\) such that
	\[
	\langle \lambda,v\rangle\geq 0
	\qquad
	\forall v\in
	T_{\mathcal{R}_{\mathcal{F}}^\Omega
		(\leq T,x_e;\mathcal{N}_{x_e})}(x_e).
	\]
\end{definition}
\begin{remark}[Relation with STLC]
	\label{rem:stlc-first-order}
	STLC implies the convexified first-order STLC condition, but the
	converse need not hold. The condition only excludes first-order
	one-sided obstructions. Higher-order obstructions may still prevent
	STLC, which is why additional bracket conditions, such as Sussmann's
	\(\mathcal{S}(\theta)\)-condition, are needed in the
	interior-balancing regime.
\end{remark}
\begin{remark}[Dual interpretation and role in Case~(B)]
	\label{rem:fostlc-dual}
	The preceding condition is the dual formulation of the absence of a
	first-order separating covector. If
	\[
	\operatorname{co}\,
	T_{\mathcal{R}_{\mathcal{F}}^\Omega
		(\leq T,x_e;\mathcal{N}_{x_e})}(x_e)
	\neq
	\mathbb{R}^n,
	\]
	then the supporting-hyperplane theorem yields a nonzero covector
	\(\lambda\) such that $\langle \lambda,v\rangle\geq 0$ for every first-order reachable direction \(v\).
	
	The closed convex hull appears because the existence of a separating
	covector is unchanged when the tangent cone is replaced by its closed
	convex hull. The novelty in the no-balancing
	regime is not the separation theorem itself, but the identification of
	the feasible velocity set \(V(x_e)\) as the correct object to separate
	from the origin, and the conversion of this separation into a local
	barrier proving failure of STLC.
\end{remark}

STLC implies local accessibility but is strictly stronger. A system may
be locally accessible without being STLC when forward reachability is
biased in a particular direction. In the present setting,
zero-excluding constraints can preclude STLC while still leaving open
the possibility of finite-time reachability through admissible
excursions, as discussed in Section~\ref{sec:case-B}.

\section{Geometric Classification of Reference States}
\label{sec:classification}

Before introducing the input-space classification, it is useful to
recall the natural ambient-space viewpoint based on the feasible
velocity set \(V(x_e)\). If
\[
0\in\operatorname{int}V(x_e),
\]
then the drift can be balanced by an admissible input and, at the
instantaneous level, admissible velocities are available in every
direction of \(\mathbb{R}^n\). This condition reflects the local
signed-variation geometry underlying classical sufficient conditions
for local accessibility and STLC
\cite{krener1974generalization,sussmann1987general}. In full-row-rank
situations, \(\operatorname{rank}G(x_e)=n\), it provides a natural
ambient-space way to express the favorable regime.

However, this ambient-space viewpoint becomes structurally
inapplicable as soon as \(\operatorname{rank}G(x_e)<n\), since
\(V(x_e)\) then lies in a proper affine subspace of \(\mathbb{R}^n\)
and has empty interior
(Proposition~\ref{prop:empty-interior-underactuated}). This is not an
exotic situation: it includes all underactuated systems \((m<n)\) and,
more generally, all control-affine systems written in second-order
state form \((q,\dot q)\in\mathbb{R}^{2k}\), where the control
distribution acts only on the velocity block and has at most \(k\)
independent columns in \(\mathbb{R}^{2k}\). Mechanical systems,
fully actuated or not, thus fall outside this ambient-space interior
condition when analyzed at the state level.\footnote{One could attempt
	to relax \(0\in\operatorname{int}V(x_e)\) to
	\(0\in\operatorname{relint}V(x_e)\) in the affine hull of \(V(x_e)\).
	This formulation, however, would require restating the accessibility
	and STLC theorems of~\cite{krener1974generalization,sussmann1987general}
	in the relative topology of a state-dependent affine subspace, an
	unwieldy reformulation. The \(\mathcal{B}\)-classification adopted
	here avoids the issue by operating directly on the input set in
	\(\mathbb{R}^m\), where the input box has non-empty interior
	independently of the rank of \(G(x_e)\).}

The point of the present section is to formalize a different
viewpoint: rather than classifying reference states by the position of
the origin relative to the ambient feasible velocity set \(V(x_e)\),
we classify them by the existence and location of balancing inputs
inside the admissible input set \(\Omega(x_e)\). This leads to the
admissible balancing set
\(\mathcal{B}(x_e)\subseteq\Omega(x_e)\subset\mathbb{R}^m\)
(Definition~\ref{def:balancing-set}), which lives in input space
rather than state space. To the best of our knowledge, this
input-space classification has not been used as a systematic framework
for nonlinear systems with state-dependent zero-excluding input
constraints.

The \(\mathcal{B}\)-classification developed below yields a single
geometric framework that:
\begin{itemize}
	\item[(i)] covers all control-affine systems with zero-excluding
	inputs under no rank assumption on \(G(x_e)\);
	\item[(ii)] coincides with the ambient-space condition
	\(0\in\operatorname{int}V(x_e)\) when the latter applies, namely when
	\(\operatorname{rank}G(x_e)=n\)
	(Proposition~\ref{prop:coincidence-fully-actuated});
	\item[(iii)] remains operational in the structurally underactuated
	regime \(\operatorname{rank}G(x_e)<n\) that motivates the
	cable-driven robotic application of Section~\ref{sec:application}
	(Proposition~\ref{prop:strict-generality-underactuated}).
\end{itemize}

\subsection{The \(\mathcal{B}\)-based classification}
\label{subsec:B-classification}

We now classify reference states according to the existence and
location of admissible balancing inputs.

The three regimes are:
\[
\begin{array}{ll}
	\text{(A)} &
	\mathcal{B}(x_e)\cap\operatorname{int}\Omega(x_e)\neq\emptyset,
	\\[1mm]
	\text{(B)} &
	\mathcal{B}(x_e)=\emptyset,
	\\[1mm]
	\text{(C)} &
	\mathcal{B}(x_e)\neq\emptyset
	\text{ and }
	\mathcal{B}(x_e)\cap\operatorname{int}\Omega(x_e)=\emptyset.
\end{array}
\]
Case~(A) admits an interior admissible balancing input, allowing
locally signed control variations after an input shift. Case~(B)
admits no admissible balancing input and leads to the one-sided
obstruction analyzed in Section~\ref{sec:case-B}. Case~(C) admits
balancing only on \(\partial\Omega(x_e)\). These three regimes form a
partition of \(\mathbb{R}^n\): they are mutually exclusive and
exhaustive, so every \(x_e\in\mathbb{R}^n\) falls in exactly one case.

\begin{remark}[Boundary-balancing regime]
	\label{rem:boundary-balancing}
	The boundary-balancing regime
	\[
	\mathcal{B}(x_e)\neq\emptyset,\qquad
	\mathcal{B}(x_e)\cap\operatorname{int}\Omega(x_e)=\emptyset
	\]
	is identified by the \(\mathcal{B}\)-classification but is not pursued
	in the present paper. In this regime, every admissible balancing input
	lies on the boundary of the input box, so that an input shift produces
	a one-sided tangent-cone control set rather than a symmetric
	neighborhood of the origin. This regime requires a separate analysis
	of asymmetric polyhedral control cones and is left for future work.
\end{remark}

\begin{remark}[Equivalent formulation via \(\mathcal{B}_e\)]
	\label{rem:equivalent-formulation}
	Since \(\operatorname{int}\Omega(x_e)\subseteq\Omega(x_e)\), one has
	\[
	\mathcal{B}(x_e)\cap\operatorname{int}\Omega(x_e)
	=
	\mathcal{B}_e(x_e)\cap\operatorname{int}\Omega(x_e),
	\]
	so Case~(A) is equivalently formulated as
	\[
	\mathcal{B}_e(x_e)\cap\operatorname{int}\Omega(x_e)\neq\emptyset.
	\]
	
	This equivalent form is convenient when \(\mathcal{B}_e(x_e)\) admits
	a closed-form expression, for instance the singleton given by
	Cramer's rule for the planar CDPR of Section~\ref{sec:application}.
	We retain the formulation in terms of \(\mathcal{B}(x_e)\) throughout:
	it treats Cases~(A), (B), and (C) uniformly and avoids confusing
	Case~(B) with virtual balance points, which satisfy
	\(\mathcal{B}_e(x_e)\neq\emptyset\) but
	\(\mathcal{B}(x_e)=\emptyset\).
\end{remark}
\subsection{Relation to the ambient condition
	\(0\in\operatorname{int}V(x_e)\)}
\label{subsec:relation-to-V}

We now make precise the relation between the
\(\mathcal{B}\)-classification and the ambient-space condition on
\(V(x_e)\), distinguishing the two structural regimes
\(\operatorname{rank}G(x_e)=n\) and
\(\operatorname{rank}G(x_e)<n\).

\begin{proposition}[Empty interior of \(V(x_e)\) in the rank-deficient regime]
	\label{prop:empty-interior-underactuated}
	Let \(x_e\in\mathbb{R}^n\) and
	\(G(x_e):=[g_1(x_e),\ldots,g_m(x_e)]\in\mathbb{R}^{n\times m}\). If
	\(\operatorname{rank}G(x_e)<n\), then
	\[
	\operatorname{int}V(x_e)=\emptyset.
	\]
	In particular, this holds whenever \(m<n\).
\end{proposition}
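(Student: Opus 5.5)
The plan is to show that \(V(x_e)\) sits inside a proper affine subspace of \(\mathbb{R}^n\), and that no such subspace contains an open ball. By definition~\eqref{eq:velocity_set}, \(V(x_e)=f_0(x_e)+G(x_e)\Omega(x_e)\). Since \(G(x_e)\Omega(x_e)\subseteq\operatorname{Im}G(x_e)\), we will obtain the inclusion
\[
V(x_e)\subseteq A:=f_0(x_e)+\operatorname{Im}G(x_e).
\]
Here \(A\) is an affine subspace whose direction \(W:=\operatorname{Im}G(x_e)\) has dimension \(\operatorname{rank}G(x_e)<n\).

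Next we show that \(A\) has empty interior in \(\mathbb{R}^n\). Because \(W\) is a proper linear subspace, we will pick a nonzero vector \(w\in W^\perp\). Suppose \(B_r(y)\subseteq A\) for some \(y\) and some \(r>0\). Then both \(y\) and \(y+\tfrac r2 w/\|w\|\) lie in \(A\), so their difference \(\tfrac r2 w/\|w\|\) lies in \(W\). Since it also lies in \(W^\perp\), it must be zero, which contradicts \(w\neq0\). Hence \(\operatorname{int}A=\emptyset\). Monotonicity of the interior operator under inclusion then gives \(\operatorname{int}V(x_e)\subseteq\operatorname{int}A=\emptyset\).

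For the final assertion, \(G(x_e)\) is an \(n\times m\) matrix, so \(\operatorname{rank}G(x_e)\le m<n\) whenever \(m<n\), and the first part applies.

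The argument uses neither Assumption~\ref{ass:1} nor zero exclusion. None of the steps is a genuine obstacle. The only point needing care is the second step, where the empty interior of a proper affine subspace is obtained from an orthogonal direction rather than simply asserted.
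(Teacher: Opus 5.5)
Your proposal is correct and follows the paper's proof: both show \(V(x_e)\subseteq f_0(x_e)+\operatorname{Im}G(x_e)\), an affine subspace of dimension \(\operatorname{rank}G(x_e)<n\), and conclude that \(V(x_e)\) has empty interior. Your orthogonal-vector argument justifies the fact the paper only asserts (that a proper affine subspace has empty interior), and you also spell out the \(m<n\) case via \(\operatorname{rank}G(x_e)\le m\).
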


\begin{proof}
	The feasible velocity set satisfies
	\[
	V(x_e)\subseteq f_0(x_e)+\operatorname{Im}G(x_e).
	\]
	Hence \(V(x_e)\) is contained in an affine subspace of
	\(\mathbb{R}^n\) of dimension \(\operatorname{rank}G(x_e)<n\). Such an
	affine subspace has empty interior in \(\mathbb{R}^n\), and therefore
	so does \(V(x_e)\).
\end{proof}

\begin{proposition}[Coincidence with \(V\) in the full-row-rank regime]
	\label{prop:coincidence-fully-actuated}
	Assume \(\operatorname{rank}G(x_e)=n\). Then
	\begin{equation}
		\label{eq:coincidence}
		0\in\operatorname{int}V(x_e)
		\quad\Longleftrightarrow\quad
		\mathcal{B}(x_e)\cap\operatorname{int}\Omega(x_e)\neq\emptyset,
	\end{equation}
	and the boundary and exterior regimes coincide pointwise:
	\(0\in\partial V(x_e)\) if and only if Case~(C) holds, and
	\(0\notin V(x_e)\) if and only if Case~(B) holds.
\end{proposition}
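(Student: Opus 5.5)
The plan is to view \(V(x_e)\) as the image of the box \(\Omega(x_e)\) under the affine map
\[
\phi(u):=f_0(x_e)+G(x_e)u .
\]
When \(\operatorname{rank}G(x_e)=n\), this map is surjective onto \(\mathbb{R}^n\), hence open by the open mapping theorem for surjective linear maps between finite-dimensional spaces. The whole statement then rests on one set identity,
\[
\operatorname{int}V(x_e)=\phi\big(\operatorname{int}\Omega(x_e)\big),
\]
together with the already-noted equivalence \(\mathcal{B}(x_e)\neq\emptyset\iff 0\in V(x_e)\).

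\emph{Step 1: the equivalence \eqref{eq:coincidence}.}
\begin{itemize}
\item[\((\Leftarrow)\)] Take \(u_e\in\mathcal{B}(x_e)\cap\operatorname{int}\Omega(x_e)\) and an open ball \(U\ni u_e\) with \(U\subseteq\Omega(x_e)\). Since \(\phi\) is open, \(\phi(U)\) is an open neighbourhood of \(\phi(u_e)=0\) contained in \(V(x_e)\). Hence \(0\in\operatorname{int}V(x_e)\).
\item[\((\Rightarrow)\)] This is the direction I expect to be the main obstacle. Knowing that \(0\) is an interior image point does not immediately give a preimage in the interior of the box, because some preimage of \(0\) could lie on \(\partial\Omega(x_e)\). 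I will use convexity.
\begin{itemize}
\item[(a)] \(\Omega(x_e)\) is a box with \(\underline u_i<\overline u_i\) (Assumption~\ref{ass:1}), so it is a convex body with \(\operatorname{int}\Omega(x_e)\neq\emptyset\). Hence \(\Omega(x_e)=\overline{\operatorname{int}\Omega(x_e)}\).
\item[(b)] By continuity of \(\phi\), \(\phi(\operatorname{int}\Omega(x_e))\) is dense in \(V(x_e)\).
\item[(c)] \(\phi(\operatorname{int}\Omega(x_e))\) is convex, and it is open because \(\phi\) is open.
\item[(d)] A convex set \(C\) and its closure have the same interior; this is the standard fact \(\operatorname{int}\overline C=\operatorname{int}C\) for convex \(C\) with nonempty interior \cite{rockafellar1997convex}. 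Applying it to \(C=\phi(\operatorname{int}\Omega(x_e))\) gives \(\operatorname{int}V(x_e)=\phi(\operatorname{int}\Omega(x_e))\).
\end{itemize}
Therefore, if \(0\in\operatorname{int}V(x_e)\), there is \(u\in\operatorname{int}\Omega(x_e)\) with \(\phi(u)=0\), i.e. \(u\in\mathcal{B}(x_e)\cap\operatorname{int}\Omega(x_e)\).
\end{itemize}

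\emph{Step 2: the exterior regime.} Without any rank assumption, \(0\notin V(x_e)\iff\mathcal{B}(x_e)=\emptyset\), which is exactly Case~(B). This follows directly from the definitions.

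\emph{Step 3: the boundary regime.} \(V(x_e)\) is compact by Lemma~\ref{lem:V-properties}, so \(\partial V(x_e)=V(x_e)\setminus\operatorname{int}V(x_e)\). Therefore
\[
0\in\partial V(x_e)\iff 0\in V(x_e)\ \text{and}\ 0\notin\operatorname{int}V(x_e).
\]
By the definition of \(\mathcal{B}\) and Step 1, the right-hand side is equivalent to \(\mathcal{B}(x_e)\neq\emptyset\) and \(\mathcal{B}(x_e)\cap\operatorname{int}\Omega(x_e)=\emptyset\), which is Case~(C).

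Only Step 1 uses the rank hypothesis, through the openness of \(\phi\).
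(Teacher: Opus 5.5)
Your proof is correct and follows the paper's route. You use openness of \(u\mapsto f_0(x_e)+G(x_e)u\) under the full-rank hypothesis to get \(\operatorname{int}V(x_e)=\phi_{x_e}(\operatorname{int}\Omega(x_e))\), and then read off the exterior and boundary regimes from \(\mathcal{B}(x_e)\neq\emptyset\iff 0\in V(x_e)\). The paper only asserts that identity from the box having nonempty interior. You actually justify the nontrivial inclusion \(\operatorname{int}V(x_e)\subseteq\phi_{x_e}(\operatorname{int}\Omega(x_e))\) through density and the convex fact \(\operatorname{int}\overline{C}=\operatorname{int}C\), which is exactly where convexity is needed.
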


\begin{proof}
	Since \(\operatorname{rank}G(x_e)=n\), the affine map
	\[
	\phi_{x_e}:u\mapsto f_0(x_e)+G(x_e)u
	\]
	is an open map from \(\mathbb{R}^m\) onto \(\mathbb{R}^n\). Since
	\(\Omega(x_e)\) is a box with non-empty interior, it follows that
	\[
	\phi_{x_e}\big(\operatorname{int}\Omega(x_e)\big)
	=
	\operatorname{int}V(x_e).
	\]
	Hence \(0\in\operatorname{int}V(x_e)\) if and only if there exists
	\(u_e\in\operatorname{int}\Omega(x_e)\) such that
	\(\phi_{x_e}(u_e)=0\), which is precisely
	\[
	\mathcal{B}(x_e)\cap\operatorname{int}\Omega(x_e)\neq\emptyset.
	\]
	
	Moreover, \(0\notin V(x_e)\) if and only if
	\(\mathcal{B}(x_e)=\emptyset\). The remaining case,
	\(0\in V(x_e)\) but \(0\notin\operatorname{int}V(x_e)\), is equivalent
	to
	\[
	\mathcal{B}(x_e)\neq\emptyset
	\qquad\text{and}\qquad
	\mathcal{B}(x_e)\cap\operatorname{int}\Omega(x_e)=\emptyset,
	\]
	which is exactly Case~(C). This proves the claimed coincidence of the
	interior, exterior, and boundary regimes.
\end{proof}

\begin{proposition}[Strict generality in the rank-deficient regime]
	\label{prop:strict-generality-underactuated}
	Assume \(\operatorname{rank}G(x_e)<n\). Then
	\(0\in\operatorname{int}V(x_e)\) never holds
	(Proposition~\ref{prop:empty-interior-underactuated}), whereas
	\[
	\mathcal{B}(x_e)\cap\operatorname{int}\Omega(x_e)\neq\emptyset
	\]
	may hold whenever \(f_0(x_e)\in\operatorname{Im}G(x_e)\) with the
	balancing coefficients strictly inside the input box. The
	\(\mathcal{B}\)-classification is therefore strictly more general than
	the ambient-space condition on \(V(x_e)\).
\end{proposition}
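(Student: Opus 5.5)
The statement has two parts, and I would prove them separately. The first part, that \(0\in\operatorname{int}V(x_e)\) never holds, follows at once from Proposition~\ref{prop:empty-interior-underactuated}. Since \(\operatorname{rank}G(x_e)<n\), we have \(\operatorname{int}V(x_e)=\emptyset\), so in particular \(0\notin\operatorname{int}V(x_e)\).

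The second part is the claim that the \(\mathcal{B}\)-condition can still hold. Suppose \(f_0(x_e)\in\operatorname{Im}G(x_e)\). Then the unconstrained balancing set \(\mathcal{B}_e(x_e)\) is non-empty. It is the affine subspace \(u_p+\ker G(x_e)\), where \(u_p\) is any particular solution of \(G(x_e)u=-f_0(x_e)\). Assume some element \(u_e\) of this set lies in \(\operatorname{int}\Omega(x_e)\), that is, \(\underline u_i(x_e)<u_{e,i}<\overline u_i(x_e)\) for every \(i\). This is exactly what "balancing coefficients strictly inside the input box" means.

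By Remark~\ref{rem:equivalent-formulation}, \(\mathcal{B}(x_e)\cap\operatorname{int}\Omega(x_e)=\mathcal{B}_e(x_e)\cap\operatorname{int}\Omega(x_e)\), and \(u_e\) belongs to this set. Hence Case~(A) holds at \(x_e\) while the ambient interior condition fails.

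To justify the word "strictly more general", I would argue both inclusion and strictness. For inclusion, whenever \(0\in\operatorname{int}V(x_e)\) holds, the rank must be full by Proposition~\ref{prop:empty-interior-underactuated}. Proposition~\ref{prop:coincidence-fully-actuated} then gives Case~(A). So the \(\mathcal{B}\)-condition is implied by the ambient condition.

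For strictness, I would exhibit one concrete rank-deficient instance. Take \(n=2\), \(m=1\), \(g_1\equiv(1,0)^\top\), \(f_0(x_e)=(-1,0)^\top\), and \(\Omega\equiv[1/2,2]\). The balancing input is \(u_e=1\), which is interior to the box, yet \(V(x_e)\) is a segment with empty interior in \(\mathbb{R}^2\). The example also respects zero exclusion and Assumption~\ref{ass:1}, since the bounds are constant with \(0<1/2<2\).

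The main obstacle is only the interpretive point of stating precisely what "may hold" and "strictly more general" assert. Once this is phrased as "the \(\mathcal{B}\)-condition is implied by the ambient condition and is satisfiable when the ambient condition is not", the explicit example settles it.
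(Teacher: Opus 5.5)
Your proposal is correct and follows essentially the same route as the paper: the first claim comes from Proposition~\ref{prop:empty-interior-underactuated}, the second from an interior solution of the balance equation, and strictness from an explicit constant witness (yours with $n=2$, $m=1$, the paper's with $n=3$, $m=2$ and $\Omega(x_e)=[1,3]^2$). Your added inclusion step is valid and makes explicit what the paper only states in the discussion after the proposition: nonempty $\operatorname{int}V(x_e)$ forces full rank, so Proposition~\ref{prop:coincidence-fully-actuated} yields Case~(A).
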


\begin{proof}
	The first claim is immediate from
	Proposition~\ref{prop:empty-interior-underactuated}: the ambient-space
	interior of \(V(x_e)\) in \(\mathbb{R}^n\) is empty whenever
	\(\operatorname{rank}G(x_e)<n\).
	
	For the second claim, take any \(x_e\) with
	\(f_0(x_e)\in\operatorname{Im}G(x_e)\). The balancing equation
	\[
	f_0(x_e)+\sum_i u_{e,i}g_i(x_e)=0
	\]
	admits at least one solution \(u_e\in\mathbb{R}^m\), so
	\(\mathcal{B}_e(x_e)\neq\emptyset\). When some solution lies in
	\(\operatorname{int}\Omega(x_e)\), the conditions of Case~(A) are met.
	
	As an explicit witness, take \(n=3\), \(m=2\),
	\[
	f_0(x_e)=(-2,-2,0),\quad
	g_1(x_e)=(1,0,0),\quad
	g_2(x_e)=(0,1,0),
	\]
	and
	\[
	\Omega(x_e)=[1,3]\times[1,3],
	\]
	which satisfies the zero-excluding positive subcase of
	Assumption~\ref{ass:1}. Then
	\[
	u_e=(2,2)\in
	\mathcal{B}(x_e)\cap\operatorname{int}\Omega(x_e),
	\]
	so \(x_e\) belongs to Case~(A). However,
	\begin{align*}
		V(x_e) &=
		\{(u_1-2,u_2-2,0): u_i\in[1,3]\}\\
		&=
		[-1,1]\times[-1,1]\times\{0\}.
	\end{align*}
	Thus \(V(x_e)\) is a two-dimensional square embedded in a
	two-dimensional affine subspace of \(\mathbb{R}^3\), with empty
	interior in \(\mathbb{R}^3\). The ambient-space condition
	\(0\in\operatorname{int}V(x_e)\) fails, despite the
	\(\mathcal{B}\)-classification placing \(x_e\) in the favorable
	regime.
	
	The same rank-deficiency phenomenon occurs in the planar CDPR of
	Section~\ref{sec:application}, where
	\(\operatorname{rank}G(x_e)=2<6=n\). At reference configurations for
	which the balancing tensions lie in the interior of the admissible
	box, the \(\mathcal{B}\)-classification identifies Case~(A) even
	though \(V(x_e)\) has empty interior in \(\mathbb{R}^6\).
\end{proof}

Propositions~\ref{prop:coincidence-fully-actuated}--%
\ref{prop:strict-generality-underactuated} together establish that the
\(\mathcal{B}\)-classification provides a single geometric framework
covering both structural regimes. In the full-row-rank regime
\(\operatorname{rank}G(x_e)=n\), Cases~(A), (B), and (C) coincide
pointwise with
\[
0\in\operatorname{int}V(x_e),\qquad
0\notin V(x_e),\qquad
0\in\partial V(x_e),
\]
respectively, so the position of the origin relative to \(V(x_e)\) is
recovered as a specialization. In the rank-deficient regime
\(\operatorname{rank}G(x_e)<n\), the ambient condition
\(0\in\operatorname{int}V(x_e)\) becomes vacuous, but the
\(\mathcal{B}\)-classification continues to distinguish the
interior-balancing, no-balancing, and boundary-balancing regimes.

The ambient-space condition on \(V(x_e)\) is therefore a particular
case of the input-space \(\mathcal{B}\)-classification, recovered when
\(\operatorname{rank}G(x_e)=n\). In the sequel, we use the
\(\mathcal{B}\)-classification to analyze the interior-balancing and
no-balancing regimes, while the boundary-balancing regime is identified
but left for future work.
\section{Case (A): Interior Admissible Balancing Inputs}
\label{sec:case-A}

This section addresses the regime
\[
\mathcal{B}(x_e)\cap\operatorname{int}\Omega(x_e)\neq\emptyset,
\]
which is the input-space counterpart of the favorable ambient
condition \(0\in\operatorname{int}V(x_e)\) in the zero-excluding
setting. The presence of an interior admissible balancing input
\(u_e\) allows a constant input shift, uniformly admissible over a
state-neighborhood,
\[
u=u_e+\eta,
\]
(Proposition~\ref{prop:local-signed-variations}). This shift locally
embeds a symmetric-control system into the original constrained
dynamics. Classical accessibility and STLC criteria can then be applied
to the shifted system: LARC yields local accessibility via
Krener~\cite{krener1974generalization}
(Corollary~\ref{cor:interior-accessibility}), and the weighted
\(\mathcal{S}(\theta)\)-condition yields STLC via
Sussmann~\cite{sussmann1987general}
(Corollary~\ref{cor:caseA-stlc}).

The input shift is classical in the constant-domain
setting~\cite{coron2007control,bullo2005geometric}. Its
non-triviality here lies in the \emph{uniform} admissibility of the
shift across a state-neighborhood, made possible by continuity of the
bound functions (Assumption~\ref{ass:1}), and in its precise
\emph{scope}: it succeeds in Case~(A), where an interior admissible
balancing input is available, and the shift-to-equilibrium reduction is
structurally impossible in Case~(B), where no admissible balancing input
exists (Section~\ref{sec:case-B}). The boundary-balancing regime
Case~(C) is different: an input shift around a boundary balancing input
leads to a one-sided tangent-cone control set rather than a symmetric
box, and is left for future work. The \(\mathcal{B}\)-classification
can thus be read operationally as a classification of the regimes in
which the input-shift reduction to a symmetric-control problem is
available.

\begin{proposition}[Uniform admissibility of the input shift]
	\label{prop:local-signed-variations}
	Suppose there exists
	\[
	u_e\in\mathcal{B}(x_e)\cap\operatorname{int}\Omega(x_e).
	\]
	Then there exist a neighborhood \(\mathcal{N}_{x_e}\) of \(x_e\) and a
	constant \(\varepsilon>0\) such that
	\begin{equation}
		\label{eq:uniform-shift-inclusion}
		u_e+[-\varepsilon,\varepsilon]^m\subset\Omega(x),
		\qquad x\in\mathcal{N}_{x_e}.
	\end{equation}
	Under the change of variables \(u=u_e+\eta\), the
	system~\eqref{eq:system_dynamique} contains, on
	\(\mathcal{N}_{x_e}\), the shifted symmetric-control system
	\begin{equation}
		\label{eq:shifted-system}
		\dot{x}
		=
		\tilde f_0(x)+\sum_{i=1}^m \eta_i g_i(x),
		\qquad
		\eta\in[-\varepsilon,\varepsilon]^m,
	\end{equation}
	where
	\[
	\tilde f_0(x):=f_0(x)+\sum_{i=1}^m u_{e,i}g_i(x)
	\]
	satisfies \(\tilde f_0(x_e)=0\).
\end{proposition}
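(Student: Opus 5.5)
The plan is to exploit the strict inequalities defining \(u_e\in\operatorname{int}\Omega(x_e)\) together with continuity of the bound functions from Assumption~\ref{ass:1}. Since \(\Omega(x_e)\) is a product of intervals, its interior is \(\prod_i(\underline u_i(x_e),\overline u_i(x_e))\). Hence \(u_e\in\operatorname{int}\Omega(x_e)\) means
\[
\underline u_i(x_e)<u_{e,i}<\overline u_i(x_e)\quad\text{for each } i.
\]
I will define the margin
\[
2\varepsilon:=\min_i\min\{u_{e,i}-\underline u_i(x_e),\,\overline u_i(x_e)-u_{e,i}\}>0.
\]

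The first step is the uniformization. By continuity of each \(\underline u_i\) and \(\overline u_i\) at \(x_e\), there is \(\rho>0\) such that \(|\underline u_i(x)-\underline u_i(x_e)|<\varepsilon\) and \(|\overline u_i(x)-\overline u_i(x_e)|<\varepsilon\) for all \(i\) whenever \(\|x-x_e\|<\rho\). There are only finitely many functions, so a single \(\rho\) works. Take \(\mathcal{N}_{x_e}:=B_\rho(x_e)\). For \(x\in\mathcal{N}_{x_e}\) and \(|\eta_i|\le\varepsilon\) this gives
\[
u_{e,i}+\eta_i\ge u_{e,i}-\varepsilon\ge \underline u_i(x_e)+\varepsilon>\underline u_i(x),
\]
and symmetrically \(u_{e,i}+\eta_i<\overline u_i(x)\). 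This proves \eqref{eq:uniform-shift-inclusion}, and the inclusion even holds in \(\operatorname{int}\Omega(x)\).

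The second step is the algebraic reduction. Substituting \(u=u_e+\eta\) into \eqref{eq:system_dynamique} gives \(f_0(x)+\sum_i u_{e,i}g_i(x)+\sum_i\eta_i g_i(x)=\tilde f_0(x)+\sum_i\eta_i g_i(x)\). Moreover \(\tilde f_0(x_e)=0\) because \(u_e\in\mathcal{B}(x_e)\).

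The third step is the containment of trajectories, which I would phrase as follows. Take any measurable \(\eta(\cdot)\) with values in \([-\varepsilon,\varepsilon]^m\) whose trajectory of \eqref{eq:shifted-system} stays in \(\mathcal{N}_{x_e}\). Then \(u(t)=u_e+\eta(t)\) is in \(L^\infty\), and for a.e.\ \(t\) it satisfies \(u(t)\in\Omega(x(t))\) by the first step. So \(u\in\mathcal{U}_{\mathrm{ad}}(x(\cdot);0,T)\), and the same \(x(\cdot)\) is an admissible trajectory of the original system.

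The only delicate point is that the radius must be chosen uniformly for all indices, with a margin independent of \(x\). The factor \(2\) in the definition of \(\varepsilon\) is what leaves room for the \(\varepsilon\)-perturbation of the bounds, so the obstacle is minor.
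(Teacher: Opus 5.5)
Your proof is correct and follows essentially the same route as the paper: you extract a strict margin from $u_e\in\operatorname{int}\Omega(x_e)$, halve it, and use continuity of the finitely many bound functions to keep the halved box inside $\Omega(x)$ on a neighborhood, and you obtain $\tilde f_0(x_e)=0$ from $u_e\in\mathcal{B}_e(x_e)$. Your explicit trajectory-level check that $u_e+\eta(\cdot)\in\mathcal{U}_{\mathrm{ad}}$ is a small addition; the paper makes that step only later, in the proof of Corollary~\ref{cor:interior-accessibility}.
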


\begin{proof}
	Since \(u_e\in\operatorname{int}\Omega(x_e)\), there exists
	\(\varepsilon_0>0\) such that
	\[
	\underline u_i(x_e)<u_{e,i}-\varepsilon_0,
	\qquad
	u_{e,i}+\varepsilon_0<\overline u_i(x_e)
	\]
	for every \(i=1,\ldots,m\). By continuity of
	\(\underline u_i\) and \(\overline u_i\)
	(Assumption~\ref{ass:1}), there exists a neighborhood
	\(\mathcal{N}_{x_e}\) of \(x_e\) on which
	\[
	\underline u_i(x)<u_{e,i}-\varepsilon_0/2,
	\qquad
	u_{e,i}+\varepsilon_0/2<\overline u_i(x)
	\]
	for every \(i=1,\ldots,m\). Setting
	\(\varepsilon:=\varepsilon_0/2\) yields
	\eqref{eq:uniform-shift-inclusion}. The equilibrium claim
	\(\tilde f_0(x_e)=0\) follows from
	\(u_e\in\mathcal{B}_e(x_e)\).
\end{proof}

\begin{lemma}[Lie-algebra invariance under shift]
	\label{lem:lie-algebra-shift}
	The Lie algebras
	\[
	\mathcal{L}(\{f_0,g_1,\ldots,g_m\})
	\qquad\text{and}\qquad
	\mathcal{L}(\{\tilde f_0,g_1,\ldots,g_m\})
	\]
	coincide pointwise on \(\mathbb{R}^n\). In particular, LARC at
	\(x_e\) holds for the original family if and only if it holds for the
	shifted family.
\end{lemma}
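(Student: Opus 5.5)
The plan is to show that the two Lie algebras coincide as sets of vector fields, not merely after evaluation. Pointwise coincidence, and hence the LARC equivalence, then follows at once. Write \(\mathcal{L}:=\mathcal{L}(\{f_0,g_1,\ldots,g_m\})\) and \(\tilde{\mathcal{L}}:=\mathcal{L}(\{\tilde f_0,g_1,\ldots,g_m\})\). The key observation is that \(u_e\in\mathbb{R}^m\) is a \emph{constant} vector. Hence
\[
\tilde f_0=f_0+\sum_{i=1}^m u_{e,i}g_i
\]
is a real linear combination, with constant coefficients, of elements of the generating family of \(\mathcal{L}\). Conversely,
\[
f_0=\tilde f_0-\sum_{i=1}^m u_{e,i}g_i
\]
is a constant-coefficient combination of the generators of \(\tilde{\mathcal{L}}\).

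First, a Lie algebra of vector fields is in particular a real vector space, so \(\mathcal{L}\) contains \(\tilde f_0\) together with \(g_1,\ldots,g_m\). Second, \(\tilde{\mathcal{L}}\) is by definition the smallest Lie subalgebra containing \(\{\tilde f_0,g_1,\ldots,g_m\}\), so \(\tilde{\mathcal{L}}\subseteq\mathcal{L}\). Running the same argument with the roles of \(f_0\) and \(\tilde f_0\) exchanged gives \(\mathcal{L}\subseteq\tilde{\mathcal{L}}\), and hence \(\mathcal{L}=\tilde{\mathcal{L}}\). Third, evaluating at any \(x\in\mathbb{R}^n\) gives \(\mathcal{L}(x)=\tilde{\mathcal{L}}(x)\). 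Taking \(x=x_e\) in Definition~\ref{def:larc} yields the LARC equivalence.

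If one prefers a bracket-level argument, bilinearity of \([\cdot,\cdot]\) shows that every iterated bracket of \(\{\tilde f_0,g_i\}\) expands, with constant coefficients, into iterated brackets of \(\{f_0,g_i\}\), and conversely. This gives the same conclusion.

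The only point requiring care, and the one real obstacle, is the constancy of the coefficients \(u_{e,i}\). If the shift were by a state-dependent \(u_e(x)\), brackets such as \([u_e(\cdot)g_i,g_j]\) would generate extra terms \((L_{g_j}u_e)g_i\), and the equality of the algebras could fail. I would therefore note explicitly that Proposition~\ref{prop:local-signed-variations} uses a fixed \(u_e\), and that the required smoothness is inherited from the \(C^\nu\) regularity of \(\mathcal{F}\).
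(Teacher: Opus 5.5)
Your proposal is correct and rests on the same observation as the paper's proof: $\tilde f_0=f_0+\sum_i u_{e,i}g_i$ and $f_0=\tilde f_0-\sum_i u_{e,i}g_i$ are constant-coefficient combinations of the respective generating families. The paper argues by bilinear expansion of iterated brackets, which is your alternative paragraph. Your main argument instead uses the minimality of the generated subalgebra to obtain $\mathcal{L}=\tilde{\mathcal{L}}$ as sets of vector fields, a slightly cleaner packaging of the same idea.
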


\begin{proof}
	This is a standard invariance under constant linear
	feedback~\cite[\S3.5]{coron2007control}. Since
	\[
	\tilde f_0=f_0+\sum_i u_{e,i}g_i
	\]
	is a constant linear combination of the original family, every
	iterated bracket of
	\(\{\tilde f_0,g_1,\ldots,g_m\}\) expands into a constant linear
	combination of iterated brackets of
	\(\{f_0,g_1,\ldots,g_m\}\), and conversely. The generated Lie
	subalgebras therefore agree pointwise.
\end{proof}

\begin{corollary}[Accessibility under an interior balancing input]
	\label{cor:interior-accessibility}
	Assume that
	\[
	\mathcal{B}(x_e)\cap\operatorname{int}\Omega(x_e)\neq\emptyset,
	\]
	and that the system \((\mathcal{F},\Omega)\) satisfies the Lie Algebra
	Rank Condition at \(x_e\). Then \((\mathcal{F},\Omega)\) is locally
	accessible from \(x_e\) under \(\Omega\).
\end{corollary}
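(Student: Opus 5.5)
We will obtain the corollary by reducing to Krener's classical accessibility theorem for the shifted symmetric-control system of Proposition~\ref{prop:local-signed-variations}, and then transferring reachable sets back to the constrained system.

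\textbf{Step 1 (shift and uniform admissibility).} Pick \(u_e\in\mathcal{B}(x_e)\cap\operatorname{int}\Omega(x_e)\). Proposition~\ref{prop:local-signed-variations} gives an open neighborhood \(\mathcal{N}_{x_e}\) and \(\varepsilon>0\) with \(u_e+[-\varepsilon,\varepsilon]^m\subset\Omega(x)\) for all \(x\in\mathcal{N}_{x_e}\). We may shrink \(\mathcal{N}_{x_e}\) to an open ball, which keeps this inclusion. It also yields the shifted system \eqref{eq:shifted-system} with drift \(\tilde f_0=f_0+\sum_i u_{e,i}g_i\).

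\textbf{Step 2 (LARC transfers).} By Lemma~\ref{lem:lie-algebra-shift}, \(\dim\mathcal{L}(\{\tilde f_0,g_1,\dots,g_m\})(x_e)=\dim\mathcal{L}(\mathcal{F})(x_e)=n\). The shifted system therefore satisfies LARC at \(x_e\), and its control set \([-\varepsilon,\varepsilon]^m\) is a symmetric neighborhood of the origin.

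\textbf{Step 3 (Krener).} Krener's theorem~\cite{krener1974generalization} applies to a control-affine system with \(C^\nu\) (\(\nu\ge 1\), or at least enough smoothness for the brackets involved) vector fields satisfying LARC, with controls in a set whose convex hull has nonempty interior (here a symmetric box). It states that for every \(T>0\), the reachable set from \(x_e\) in time \(\le T\) has nonempty interior. The version we need is the local one, in which trajectories stay inside a prescribed neighborhood. This follows from the standard construction: the interior point is produced by composing flows of piecewise-constant controls with arbitrarily small switching times. One can therefore take \(T\) small enough that these trajectories stay in \(\mathcal{N}_{x_e}\). For larger \(T\), the reachable set with time \(\le T\) contains the one with smaller time.

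\textbf{Step 4 (embedding back).} Let \(x(\cdot)\) be a trajectory of \eqref{eq:shifted-system} on \([0,\tau]\) with values in \(\mathcal{N}_{x_e}\) and measurable \(\eta(t)\in[-\varepsilon,\varepsilon]^m\). Set \(u(t):=u_e+\eta(t)\). Then \(u\in L^\infty\), and by Step 1, \(u(t)\in\Omega(x(t))\) for a.e.\ \(t\), so \(u\in\mathcal{U}_{\mathrm{ad}}(x(\cdot);0,\tau)\). Moreover, \(\dot x=f_0(x)+G(x)u\), so \(x\) is an admissible trajectory of \eqref{eq:system_dynamique}. Hence the shifted local reachable set is contained in \(\mathcal{R}_{\mathcal{F}}^\Omega(\le T,x_e;\mathcal{N}_{x_e})\). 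Since a superset of a set with nonempty interior has nonempty interior, Definition~\ref{def:local-accessibility} is satisfied.

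\textbf{Main obstacle.} The main difficulty is Step 3. We must make sure the cited accessibility theorem gives nonempty interior for reachable sets \emph{restricted to the neighborhood} \(\mathcal{N}_{x_e}\), for every \(T\), rather than only for the global reachable set. Our first approach is to invoke the small-time form of Krener's result: the interior point is reached by concatenating finitely many flows with short durations. Continuity of flows then keeps these trajectories in \(\mathcal{N}_{x_e}\) for small total time. Monotonicity of the reachable set in \(T\) finishes the argument.
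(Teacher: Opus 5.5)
Your proposal is correct and follows the paper's proof essentially step for step. You shift the input via Proposition~\ref{prop:local-signed-variations}, transfer LARC through Lemma~\ref{lem:lie-algebra-shift}, apply Krener's theorem to the shifted symmetric-control system, and embed the shifted local reachable set into $\mathcal{R}_{\mathcal{F}}^\Omega(\le T,x_e;\mathcal{N}_{x_e})$; your added care about the neighborhood-restricted form of Krener's result (short total switching time plus monotonicity in $T$) and about the $L^\infty$ admissibility of $u=u_e+\eta$ makes explicit what the paper asserts directly.
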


\begin{proof}
	By Proposition~\ref{prop:local-signed-variations}, there exist a
	neighborhood \(\mathcal{N}_{x_e}\) and \(\varepsilon>0\) such that the
	shifted system~\eqref{eq:shifted-system} with symmetric control set
	\([-\varepsilon,\varepsilon]^m\) is admissible for the original
	constrained system on \(\mathcal{N}_{x_e}\). Indeed, every trajectory
	of the shifted system staying in \(\mathcal{N}_{x_e}\) is also a
	trajectory of \((\mathcal{F},\Omega)\) via the inverse change of
	variables \(u=u_e+\eta\). The shifted control set contains \(0\) in its
	interior, and \(\tilde f_0(x_e)=0\), so \(x_e\) is an equilibrium of
	the shifted system.
	
	By Lemma~\ref{lem:lie-algebra-shift}, LARC for the original family at
	\(x_e\) implies LARC for the shifted family at \(x_e\). Krener's
	accessibility theorem with drift~\cite{krener1974generalization},
	applied to the shifted system on \([-\varepsilon,\varepsilon]^m\),
	ensures that the reachable set of the shifted system from \(x_e\) in
	\(\mathcal{N}_{x_e}\) has non-empty interior in \(\mathbb{R}^n\) for
	every \(T>0\). Since the shifted reachable set is contained in the
	reachable set of the original constrained system, the latter also has
	non-empty interior for every \(T>0\). Hence
	\((\mathcal{F},\Omega)\) is locally accessible from \(x_e\).
\end{proof}

\begin{remark}[Propagation to nearby states]
	\label{rem:propagation-accessibility}
	If LARC holds at \(x_e\), then there exists a finite collection of Lie
	brackets whose values span \(\mathbb{R}^n\) at \(x_e\). By continuity
	of these bracket vector fields, the same collection remains of full
	rank on a neighborhood of \(x_e\). Hence LARC propagates locally.
	Local accessibility from a nearby state \(x_0\) requires, in addition,
	the existence of an interior admissible balancing input at \(x_0\).
	This is automatic on a neighborhood when the algebraic balancing
	equation admits a continuous admissible selection
	\(x\mapsto u_e(x)\), as in the planar CDPR application of
	Section~\ref{sec:application}, where \(\mathcal{B}_e(x)\) is the
	singleton determined by Cramer's rule.
\end{remark}

Local accessibility is strictly weaker than STLC: a system may be
locally accessible without being STLC when forward reachability is
biased in a particular direction. The following corollary upgrades
Corollary~\ref{cor:interior-accessibility} to STLC via Sussmann's
higher-order condition~\cite[Thm.~7.3]{sussmann1987general}.

\begin{corollary}[STLC under interior balancing and Sussmann's condition]
	\label{cor:caseA-stlc}
	Assume that the family
	\[
	\mathcal{F}=\{f_0,g_1,\ldots,g_m\}
	\]
	is real-analytic on \(\mathbb{R}^n\), that
	\[
	\mathcal{B}(x_e)\cap\operatorname{int}\Omega(x_e)\neq\emptyset,
	\]
	and that the shifted family
	\[
	\tilde{\mathcal{F}}:=\{\tilde f_0,g_1,\ldots,g_m\}
	\]
	at \(x_e\) satisfies Sussmann's
	\(\mathcal{S}(\theta)\)-condition~\cite[Thm.~7.3]{sussmann1987general}
	for some \(\theta\in(0,1]\). Then \((\mathcal{F},\Omega)\) is
	small-time locally controllable at \(x_e\).
\end{corollary}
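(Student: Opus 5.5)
The proof follows the pattern of Corollary~\ref{cor:interior-accessibility}: embed a symmetric-control system inside the constrained one, apply Sussmann's theorem to it, and transfer the conclusion back.

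\emph{Embedding.} By Proposition~\ref{prop:local-signed-variations} there are a neighborhood \(\mathcal{N}_{x_e}\) and \(\varepsilon>0\) such that \(u_e+[-\varepsilon,\varepsilon]^m\subset\Omega(x)\) for all \(x\in\mathcal{N}_{x_e}\). The shifted system~\eqref{eq:shifted-system} then has \(\tilde f_0(x_e)=0\) and control set \([-\varepsilon,\varepsilon]^m\), which is compact, convex and contains \(0\) in its interior. Since each \(\tilde f_0=f_0+\sum_i u_{e,i}g_i\) is a constant linear combination of analytic fields, the family \(\tilde{\mathcal F}\) is real-analytic. These are exactly the hypotheses of \cite[Thm.~7.3]{sussmann1987general}. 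Sussmann's result is stated for a symmetric control set, typically the unit cube. If needed, I would rescale \(\eta=\varepsilon w\) with \(w\in[-1,1]^m\), i.e.\ replace \(g_i\) by \(\varepsilon g_i\). Scaling the \(g_i\) by a positive constant changes each bracket only by a positive factor. Hence it preserves both the good/bad bracket classification and the \(\mathcal S(\theta)\) neutralization structure, so the hypothesis transfers.

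\emph{Applying Sussmann's theorem.} The shifted system is STLC at \(x_e\). By the standard refinement, the controlling trajectories can be confined to any prescribed neighborhood. Concretely, fix any open \(\mathcal{N}'\subset\mathcal{N}_{x_e}\) containing \(x_e\). For every \(T>0\) the reachable set of the shifted system in time \(\le T\) contains a neighborhood of \(x_e\). By continuity of solutions, restricting to small \(T\) forces trajectories to stay in \(\mathcal{N}'\), since all velocities are uniformly bounded on a compact neighborhood. This gives, for every \(T>0\), a radius \(r(T)>0\) with \(B_{r(T)}(x_e)\) reachable by shifted trajectories on \([0,\tau]\), \(\tau\le T\), remaining in \(\mathcal{N}'\).

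\emph{Transfer.} Each such trajectory, with control \(\eta(\cdot)\in L^\infty\) valued in \([-\varepsilon,\varepsilon]^m\), stays in \(\mathcal N_{x_e}\). So \(u(t)=u_e+\eta(t)\in\Omega(x(t))\) for a.e.\ \(t\), which means \(u\in\mathcal U_{\rm ad}\); measurability is preserved by the constant shift. It solves~\eqref{eq:system_dynamique}, so
\[
B_{r(T)}(x_e)\subseteq\mathcal{R}^\Omega_{\mathcal F}(\le T,x_e;\mathcal N')
\]
for every \(T>0\), which is Definition~\ref{def:stlc}.

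\emph{Main obstacle.} The main obstacle is bookkeeping rather than analysis. One must match Sussmann's STLC formulation, which has reachable sets in arbitrary time and no neighborhood restriction, to Definition~\ref{def:stlc}, which requires a fixed neighborhood and trajectories remaining in it. The key point is that the shifted system is only known to be embedded on \(\mathcal N_{x_e}\), so one must confirm that small-time trajectories do not leave it. I would handle this with the uniform velocity bound \(\sup_{x\in \bar B,\,|\eta|\le\varepsilon}\|\tilde f_0(x)+G(x)\eta\|<\infty\) on a closed ball \(\bar B\subset\mathcal N_{x_e}\). This bound shows that trajectories of duration at most \(T_0\) stay in \(\bar B\), which settles the issue.
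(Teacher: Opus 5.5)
Your proposal is correct and follows essentially the same route as the paper: embed the shifted symmetric-control system via Proposition~\ref{prop:local-signed-variations}, apply Sussmann's theorem to it, and transfer the ball inclusion back through $u=u_e+\eta$. You also supply two details the paper's proof leaves implicit, and both are handled correctly: rescaling $[-\varepsilon,\varepsilon]^m$ to the unit cube, which preserves the $\mathcal{S}(\theta)$-condition because brackets only pick up positive factors $\varepsilon^{k}$, and the velocity-bound argument that keeps short-time trajectories inside $\mathcal{N}_{x_e}$.
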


\begin{proof}
	By Proposition~\ref{prop:local-signed-variations}, the change of
	variables \(u=u_e+\eta\) embeds, on a state-neighborhood
	\(\mathcal{N}_{x_e}\), the shifted control-affine system
	\eqref{eq:shifted-system} with control set
	\([-\varepsilon,\varepsilon]^m\) into the original constrained system
	\((\mathcal{F},\Omega)\). The shifted system has an analytic
	control-affine structure inherited from \(\mathcal{F}\), a controlled
	equilibrium at \(x_e\) since \(\tilde f_0(x_e)=0\), and a symmetric
	compact convex control set containing \(0\) in its interior. The
	hypotheses of Sussmann's theorem are therefore met, and the assumed
	\(\mathcal{S}(\theta)\)-condition implies STLC of the shifted system
	at \(x_e\): for every \(\delta>0\), there exists \(r(\delta)>0\) such
	that
	\[
	B_{r(\delta)}(x_e)
	\subseteq
	\mathcal{R}_{\tilde{\mathcal{F}}}^{[-\varepsilon,\varepsilon]^m}
	(\leq \delta,x_e;\mathcal{N}_{x_e}).
	\]
	Since every shifted trajectory staying in \(\mathcal{N}_{x_e}\) is an
	admissible trajectory of the original constrained system, the same
	inclusion holds for
	\[
	\mathcal{R}_{\mathcal{F}}^{\Omega}
	(\leq \delta,x_e;\mathcal{N}_{x_e}).
	\]
	This is precisely the STLC condition of Definition~\ref{def:stlc}.
\end{proof}

\begin{remark}[Role of Sussmann's \(\mathcal{S}(\theta)\)-condition]
	\label{rem:sussmann-condition}
	For \(\theta\in(0,1]\), the \(\mathcal{S}(\theta)\)-condition
	of~\cite[Thm.~7.3]{sussmann1987general} requires that every
	\(\theta\)-bad bracket \(\beta\) of \(\tilde{\mathcal{F}}\) at
	\(x_e\) be a linear combination of \(\theta\)-good brackets
	\(\gamma\) with
	\[
	\|\gamma\|_\theta<\|\beta\|_\theta,
	\]
	where
	\[
	\|\beta\|_\theta
	:=
	\theta\,\delta_0(\beta)+\sum_i\delta_i(\beta)
	\]
	denotes the \(\theta\)-degree. Here, bad brackets are those with odd
	degree in \(\tilde f_0\) and even degree in each \(g_i\). The condition
	is automatic when \(\{g_1,\ldots,g_m\}\) generates the full tangent
	space at \(x_e\), or when
	\(\operatorname{span}\{g_1,\ldots,g_m\}\) is involutive of full rank.
	For the planar underactuated CDPR of Section~\ref{sec:application},
	neither specialization holds; the condition fails at \(\theta=1\) but
	holds for every \(\theta\in(0,1/2)\) via the iterated drift fields
	\[
	\operatorname{ad}^k(\tilde f_0)g_i,
	\qquad k=1,2,3,
	\]
	as verified in the supplementary material.
\end{remark}

\begin{remark}[Dependence on the choice of \(u_e\)]
	\label{rem:choice-ubar}
	When
	\[
	\mathcal{B}(x_e)\cap\operatorname{int}\Omega(x_e)
	\]
	is not a singleton, for instance in over-actuated cases with
	\(\operatorname{rank}G(x_e)<m\), the shift target \(u_e\) is not
	unique. Lemma~\ref{lem:lie-algebra-shift} implies that LARC of the
	shifted family, and hence Corollary~\ref{cor:interior-accessibility},
	are independent of the choice of \(u_e\). By contrast, the
	\(\mathcal{S}(\theta)\)-condition for the shifted family
	\(\tilde{\mathcal{F}}\) depends on \(u_e\) through \(\tilde f_0\).
	Corollary~\ref{cor:caseA-stlc} asserts STLC as soon as the condition
	holds for \emph{some} admissible interior input
	\[
	u_e\in\mathcal{B}(x_e)\cap\operatorname{int}\Omega(x_e).
	\]
\end{remark}

The favorable Case~(A), where an interior admissible balancing input
enables a uniform shift to a symmetric problem, contrasts sharply with
the opposite regime in which no admissible balancing input exists at
all. The next section addresses this regime, which constitutes the
central contribution of the paper.
\section{Case (B): No Admissible Balancing Input}
\label{sec:case-B}

This section establishes the main technical obstruction of the paper.
In Case~(A), the existence of an admissible balancing input interior to
\(\Omega(x_e)\) enabled a uniform shift to a symmetric-control problem,
allowing classical accessibility and STLC criteria to apply. We now
address the opposite regime:
\[
\mathcal{B}(x_e)=\emptyset,
\qquad\text{equivalently}\qquad
0\notin V(x_e).
\]
No admissible input can hold the state at \(x_e\), even
instantaneously, and the feasible velocity set is strictly separated
from the origin.

The argument proceeds in three steps. First, the projection of the
origin onto the compact convex set \(V(x_e)\) produces a separating
covector \(\lambda\) and a strictly positive distance
\[
\alpha=\operatorname{dist}(0,V(x_e))
\]
(Lemma~\ref{lem:projection}). Second, by Hausdorff continuity of the
set-valued map \(V(\cdot)\), this separation extends uniformly to a
neighborhood of \(x_e\), with a possibly smaller rate
\(\alpha'\leq\alpha\) (Lemma~\ref{lem:uniformization}). Third, the
linear functional
\[
\Phi(x):=\langle\lambda,x\rangle
\]
becomes a local barrier: it increases along every admissible trajectory
remaining in the obstruction neighborhood, at rate at least
\(\alpha'\). This rules out local reachability of points satisfying
\(\Phi(y)<\Phi(x_e)\), and therefore rules out STLC
(Theorem~\ref{thm:no-balancing-nonstlc}).

The key point is that the separation is performed in the instantaneous
feasible velocity set \(V(x_e)\), rather than in a time-integrated
reachable set. This is the main departure from Brammer's classical
separating-hyperplane argument~\cite{brammer1972controllability}, and
is what makes the obstruction applicable to nonlinear control-affine
systems with state-dependent constraints at reference states that need
not be controlled equilibria of the constrained system.

\subsection{Geometric Setup: Projection and Separation}
\label{subsec:caseB-projection}

The first ingredient is the projection of the origin onto \(V(x_e)\).
Since \(V(x_e)\) is convex and compact
(Lemma~\ref{lem:V-properties}), this projection is unique. Since
\(0\notin V(x_e)\), the projection is nonzero and yields a strict
separation.

\begin{lemma}[Projection of the origin onto \(V(x_e)\)]
	\label{lem:projection}
	Let \(x_e\in\mathbb{R}^n\) satisfy \(0\notin V(x_e)\). The
	minimization problem
	\[
	\min_{v\in V(x_e)} \frac{1}{2}\|v\|^2
	\]
	admits a unique minimizer \(v^\star\neq 0\). Setting
	\[
	\alpha:=\|v^\star\|>0,
	\qquad
	\lambda:=\frac{v^\star}{\|v^\star\|}\in(\mathbb{R}^n)^*,
	\]
	where \(\mathbb{R}^n\) is identified with its dual through the
	Euclidean pairing, one has
	\begin{equation}
		\label{eq:separation}
		\langle \lambda,v\rangle \geq \alpha
		\qquad
		\forall v\in V(x_e).
	\end{equation}
	The constant \(\alpha=\operatorname{dist}(0,V(x_e))\) is the Euclidean
	distance from the origin to \(V(x_e)\).
\end{lemma}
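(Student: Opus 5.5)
The plan is the standard projection onto a closed convex set, with Lemma~\ref{lem:V-properties}$(i)$ supplying compactness and convexity of \(V(x_e)\).

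\emph{Existence and uniqueness.} The function \(v\mapsto\frac12\|v\|^2\) is continuous and \(V(x_e)\) is nonempty and compact, because \(\Omega(x_e)\) is a nonempty box by Assumption~\ref{ass:1}. Weierstrass's theorem therefore gives a minimizer \(v^\star\). For uniqueness, suppose \(v_1\neq v_2\) are both minimizers with common norm \(\alpha\). By convexity their midpoint lies in \(V(x_e)\), and the parallelogram identity gives
\[
\left\|\tfrac{v_1+v_2}{2}\right\|^2=\alpha^2-\tfrac14\|v_1-v_2\|^2<\alpha^2 .
\]
This contradicts minimality. Since \(0\notin V(x_e)\) and \(v^\star\in V(x_e)\), we have \(v^\star\neq0\), so \(\alpha=\|v^\star\|>0\). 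By definition of the minimizer, \(\alpha=\operatorname{dist}(0,V(x_e))\).

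\emph{Separation inequality.} I will derive the variational inequality
\[
\langle v^\star, v-v^\star\rangle\ge 0\qquad\text{for all } v\in V(x_e).
\]
Fix \(v\in V(x_e)\) and \(t\in(0,1]\). By convexity, \(v^\star+t(v-v^\star)\in V(x_e)\), so minimality gives
\[
0\le \|v^\star+t(v-v^\star)\|^2-\|v^\star\|^2=2t\langle v^\star,v-v^\star\rangle+t^2\|v-v^\star\|^2 .
\]
Dividing by \(t\) and letting \(t\to0^+\) yields the variational inequality. It rearranges to \(\langle v^\star,v\rangle\ge\|v^\star\|^2=\alpha^2\). Dividing by \(\alpha>0\) gives \(\langle\lambda,v\rangle\ge\alpha\), which is \eqref{eq:separation}. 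The identification of \(\lambda\) with a covector is just the Euclidean pairing.

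There is no substantive obstacle. The only points to check are that the first-order limiting argument uses convexity (so the segment stays in \(V(x_e)\)), and that strict positivity of \(\alpha\) uses \(0\notin V(x_e)\) together with closedness of \(V(x_e)\). Both are provided by Lemma~\ref{lem:V-properties}.
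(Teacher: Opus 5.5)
Your proof is correct and follows the same route as the paper: existence from compactness of \(V(x_e)\), uniqueness from strict convexity (which you make explicit via the parallelogram identity), \(v^\star\neq 0\) from \(0\notin V(x_e)\), and the projection variational inequality followed by division by \(\alpha\). The only difference is that you derive the variational inequality from first principles rather than quoting it, as the paper does.
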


\begin{proof}
	Since \(V(x_e)\) is nonempty, compact, and convex
	(Lemma~\ref{lem:V-properties}), and since
	\(v\mapsto \frac{1}{2}\|v\|^2\) is continuous and strictly convex, the
	minimization problem admits a unique minimizer
	\(v^\star\in V(x_e)\). Since \(0\notin V(x_e)\), one has
	\(v^\star\neq 0\). The variational inequality for the Euclidean
	projection gives
	\[
	\langle v^\star,v-v^\star\rangle\geq 0
	\qquad
	\forall v\in V(x_e).
	\]
	Hence
	\[
	\langle v^\star,v\rangle\geq \|v^\star\|^2
	\qquad
	\forall v\in V(x_e).
	\]
	Dividing by \(\|v^\star\|>0\) yields
	\eqref{eq:separation}.
\end{proof}

The covector \(\lambda\) encodes the first-order obstruction: every
admissible velocity at \(x_e\) has a positive component along
\(\lambda\), at least \(\alpha>0\). To turn this pointwise separation
into a statement along trajectories, the separation must persist for
nearby states.

\subsection{Local Uniformization}
\label{subsec:caseB-uniformization}

An admissible trajectory starting at \(x_e\) generally leaves \(x_e\),
and its velocity at time \(t\) belongs to \(V(x(t))\), not to
\(V(x_e)\). The next lemma shows that the separation obtained at
\(x_e\) remains valid on a neighborhood, possibly with a smaller
constant.

\begin{lemma}[Local uniformization]
	\label{lem:uniformization}
	Let \(x_e\in\mathbb{R}^n\) satisfy \(0\notin V(x_e)\), and let
	\(\lambda\) and \(\alpha>0\) be as in
	Lemma~\ref{lem:projection}. There exist \(\rho>0\) and
	\(\alpha'\in(0,\alpha]\) such that
	\begin{equation}
		\label{eq:uniformization}
		\inf_{v\in V(x)}\langle\lambda,v\rangle\geq \alpha'
		\qquad
		\forall x\in B_\rho(x_e).
	\end{equation}
\end{lemma}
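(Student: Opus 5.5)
The plan is to combine the pointwise separation \eqref{eq:separation} of Lemma~\ref{lem:projection} with the Hausdorff continuity of \(V(\cdot)\) from Lemma~\ref{lem:V-properties}\,$ii)$. The natural choice is \(\alpha':=\alpha/2\), so the task reduces to finding \(\rho>0\) such that every velocity in \(V(x)\), for \(x\in B_\rho(x_e)\), loses at most \(\alpha/2\) of its \(\lambda\)-component relative to the bound at \(x_e\).

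First, by Lemma~\ref{lem:V-properties}\,$ii)$, I choose \(\rho>0\) such that
\[
d_H\big(V(x),V(x_e)\big)<\alpha/2
\qquad\text{for all } x\in B_\rho(x_e).
\]
Next, I fix such an \(x\) and an arbitrary \(v\in V(x)\). Since \(V(x_e)\) is compact (Lemma~\ref{lem:V-properties}\,$i)$), the Hausdorff bound yields some \(w\in V(x_e)\) with \(\|v-w\|\leq d_H(V(x),V(x_e))<\alpha/2\).

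Then, using \(\|\lambda\|=1\) together with the Cauchy--Schwarz inequality and \eqref{eq:separation}, I estimate
\[
\langle\lambda,v\rangle
=\langle\lambda,w\rangle+\langle\lambda,v-w\rangle
\geq \alpha-\|v-w\|
>\alpha/2 .
\]
Taking the infimum over \(v\in V(x)\) gives \eqref{eq:uniformization} with \(\alpha'=\alpha/2\in(0,\alpha]\). The infimum is in fact attained, since \(V(x)\) is compact.

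I do not expect a serious obstacle here. The only delicate point is that Lemma~\ref{lem:V-properties}\,$ii)$ relies on a cited continuity theorem for compact-valued images. If that step needs to be made self-contained, I would instead bound the Hausdorff distance directly. Given \(v=f_0(x)+G(x)u\) with \(u\in\Omega(x)\), I would take \(u'\) to be the coordinatewise clipping of \(u\) to \(\Omega(x_e)\), so that \(\|u-u'\|\) is controlled by the variation of the bounds. This gives
\[
\|v-(f_0(x_e)+G(x_e)u')\|
\leq \|f_0(x)-f_0(x_e)\|+\|G(x)-G(x_e)\|\,\|u\|+\|G(x_e)\|\,\|u-u'\|,
\]
where \(\|u\|\) is bounded uniformly on a compact neighborhood because the bound functions are continuous. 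Each term tends to zero as \(x\to x_e\) by continuity of \(f_0\), \(G\) and the bound functions. This supplies the one-sided Hausdorff estimate, which is the only direction the argument uses.
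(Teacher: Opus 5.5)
Your proof is correct. It differs from the paper's in how the continuity of $V(\cdot)$ is transferred to $h(x):=\inf_{v\in V(x)}\langle\lambda,v\rangle$.

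The paper invokes Berge's maximum theorem, using two-sided Hausdorff continuity and compact values of $V(\cdot)$, to conclude that $h$ is continuous. It then extracts $\rho$ and $\alpha'$ from $h(x_e)\geq\alpha>0$ by a pure existence argument. You instead prove directly the estimate $h(x)\geq \alpha-\sup_{v\in V(x)}\operatorname{dist}(v,V(x_e))$. This is the $1$-Lipschitz dependence, in the Hausdorff metric, of a support function on its set for a unit covector.

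Your route buys three things:
\begin{itemize}
\item It is self-contained. For the one direction actually needed, your clipping estimate replaces the cited continuity theorem behind Lemma~\ref{lem:V-properties}\,$ii)$.
\item It is quantitative. It fixes $\alpha'=\alpha/2$ (or any $\alpha'<\alpha$), and combined with the clipping bound it ties $\rho$ explicitly to moduli of continuity of $f_0$, $G$ and the bound functions. This is closer in spirit to the interval certificate of Section~\ref{subsec:cdpr-barrier-verification}.
\item As you observe, it uses only the one-sided excess, i.e.\ upper semicontinuity of $V(\cdot)$ at $x_e$. So the lemma would still hold with $\underline u_i$ merely lower semicontinuous and $\overline u_i$ merely upper semicontinuous.
\end{itemize}

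The paper's route is shorter and yields full continuity of $h$, which is more than the lemma requires.
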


\begin{proof}
	Define
	\[
	h(x):=\inf_{v\in V(x)}\langle\lambda,v\rangle .
	\]
	Equivalently,
	\[
	h(x)=-\sup_{v\in V(x)} f(x,v),
	\qquad
	f(x,v):=-\langle\lambda,v\rangle .
	\]
	The function \(f\) is continuous, and by
	Lemma~\ref{lem:V-properties}, \(V(\cdot)\) is Hausdorff continuous with
	nonempty compact values. Berge's maximum theorem implies that
	\(h\) is continuous. By
	Lemma~\ref{lem:projection}, \(h(x_e)\geq \alpha>0\). Hence, by
	continuity of \(h\) at \(x_e\), there exist \(\rho>0\) and
	\(\alpha'\in(0,\alpha]\) such that \(h(x)\geq\alpha'\) for all
	\(x\in B_\rho(x_e)\).
\end{proof}

The constant \(\alpha'\) is a local robustness margin for the
separation. It satisfies \(\alpha'\leq\alpha\), with equality in
situations where the feasible velocity set does not deteriorate near
\(x_e\), for instance in constant-velocity-set linear specializations.

\subsection{Main Result: Barrier Functional and Failure of STLC}
\label{subsec:caseB-main}

We now prove that the no-balancing condition produces a local
one-sided barrier. Let
\[
\Phi(x):=\langle\lambda,x\rangle .
\]
By Lemma~\ref{lem:uniformization}, \(\Phi\) increases strictly along
every admissible trajectory that remains in \(B_\rho(x_e)\). Hence
points lying on the side
\[
\Phi(y)<\Phi(x_e)
\]
are locally unreachable, which contradicts the ball-inclusion required
by STLC.

\begin{theorem}[Strict monotonicity and failure of STLC]
	\label{thm:no-balancing-nonstlc}
	Assume \(\mathcal{B}(x_e)=\emptyset\), equivalently
	\(0\notin V(x_e)\). Let \(\lambda\), \(\rho\), and
	\(\alpha'>0\) be given by Lemma~\ref{lem:uniformization}, and define
	\[
	\Phi(x):=\langle\lambda,x\rangle .
	\]
	Then every admissible trajectory \(x(\cdot)\) of
	\eqref{eq:system_dynamique} that remains in \(B_\rho(x_e)\) satisfies,
	for almost every \(t\),
	\begin{equation}
		\label{eq:phi-dot}
		\dot\Phi(x(t))
		=
		\langle\lambda,\dot x(t)\rangle
		\geq
		\alpha'>0.
	\end{equation}
	Consequently, for all \(t\) such that
	\(x([0,t])\subset B_\rho(x_e)\),
	\begin{equation}
		\label{eq:phi-bound}
		\Phi(x(t))
		\geq
		\Phi(x(0))+\alpha' t .
	\end{equation}
	In particular, no point \(y\) satisfying
	\(\Phi(y)<\Phi(x_e)\) can be reached from \(x_e\) by an admissible
	trajectory remaining in \(B_\rho(x_e)\). Therefore the system is not
	STLC at \(x_e\).
\end{theorem}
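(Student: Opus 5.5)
The plan is to derive all three conclusions of the theorem from Lemma~\ref{lem:uniformization}, using only absolute continuity and the definition of STLC.

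\emph{Step 1 (pointwise rate).} Let \(x(\cdot)\) be an admissible trajectory on \([0,T]\) with \(x([0,T])\subset B_\rho(x_e)\). By definition there is \(u\in\mathcal{U}_{\mathrm{ad}}(x(\cdot);0,T)\) such that, for almost every \(t\), both the dynamics \eqref{eq:system_dynamique} hold and \(u(t)\in\Omega(x(t))\). At such \(t\) one has \(\dot x(t)=f_0(x(t))+G(x(t))u(t)\in V(x(t))\). Since \(x(t)\in B_\rho(x_e)\), Lemma~\ref{lem:uniformization} gives \(\langle\lambda,\dot x(t)\rangle\geq\alpha'\). Because \(\Phi\) is linear, \(\Phi\circ x\) is absolutely continuous with derivative \(\langle\lambda,\dot x(t)\rangle\) almost everywhere. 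This proves \eqref{eq:phi-dot}.

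\emph{Step 2 (integrated bound).} Absolute continuity of \(\Phi\circ x\) lets us apply the fundamental theorem of calculus for AC functions:
\[
\Phi(x(t))-\Phi(x(0))=\int_0^t\langle\lambda,\dot x(s)\rangle\,ds\geq\alpha' t
\]
whenever \(x([0,t])\subset B_\rho(x_e)\). This is \eqref{eq:phi-bound}. In particular, any trajectory from \(x_e\) confined to \(B_\rho(x_e)\) satisfies \(\Phi(x(\tau))\geq\Phi(x_e)+\alpha'\tau\). Since \(\alpha'\tau>0\) for \(\tau>0\), every point reached at positive time obeys \(\Phi(y)>\Phi(x_e)\). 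The time-zero point \(x_e\) itself has \(\Phi(y)=\Phi(x_e)\). So no point with \(\Phi(y)<\Phi(x_e)\) is reachable inside \(B_\rho(x_e)\).

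\emph{Step 3 (failure of STLC).} This is the step needing care, because Definition~\ref{def:stlc} lets the neighborhood \(\mathcal{N}_{x_e}\) be arbitrary rather than \(B_\rho(x_e)\). Suppose STLC held with some \(\mathcal{N}_{x_e}\). Fix \(\varepsilon>0\) and a point \(y\in B_{r(\varepsilon)}(x_e)\), reached by a trajectory in \(\mathcal{N}_{x_e}\) at time \(\tau\leq\varepsilon\). A trajectory may leave \(B_\rho(x_e)\) even within \(\mathcal{N}_{x_e}\), so we control its excursion by time smallness.

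\begin{itemize}
\item On the compact set \(\overline{B_\rho(x_e)}\), the speed is uniformly bounded: \(\|\dot x\|\leq M\), with \(M\) finite by Lemma~\ref{lem:V-properties} and compactness of \(V\) on compact sets (Hausdorff continuity).
\item A trajectory starting at \(x_e\) cannot exit \(B_\rho(x_e)\) before time \(\rho/M\).
\item Choosing \(\varepsilon<\rho/M\), every trajectory counted in \(\mathcal{R}_{\mathcal{F}}^\Omega(\leq\varepsilon,x_e;\mathcal{N}_{x_e})\) therefore remains in \(B_\rho(x_e)\). This follows by a first-exit-time argument applied to the continuous function \(\|x(t)-x_e\|\).
\end{itemize}

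Now take \(y=x_e-s\lambda\) with \(0<s<r(\varepsilon)\). Then \(y\in B_{r(\varepsilon)}(x_e)\) and \(\Phi(y)=\Phi(x_e)-s<\Phi(x_e)\), which contradicts Step 2. Hence the system is not STLC at \(x_e\).
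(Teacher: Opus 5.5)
Your proof is correct and follows the paper's argument: the pointwise rate from $\dot x(t)\in V(x(t))$ and Lemma~\ref{lem:uniformization}, integration, then a uniform speed bound on $\overline{B_\rho(x_e)}$ with a first-exit-time argument to confine all trajectories of duration at most $\varepsilon$ to $B_\rho(x_e)$, and finally the target $y=x_e-s\lambda$. The only difference is cosmetic. The paper introduces an auxiliary radius $r_0$ with $\overline{B_{r_0}(x_e)}\subset\mathcal{N}_{x_e}\cap B_\rho(x_e)$ and the threshold $\varepsilon_0=\min\{r_0/(2L),1\}$, whereas you work directly with $\varepsilon<\rho/M$, which suffices because the confinement does not depend on $\mathcal{N}_{x_e}$.
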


\begin{proof}
	Let \(x(\cdot)\) be an admissible trajectory of
	\eqref{eq:system_dynamique} such that \(x(t)\in B_\rho(x_e)\) for the
	times under consideration. For almost every such \(t\),
	\[
	\dot x(t)\in V(x(t)).
	\]
	Using Lemma~\ref{lem:uniformization}, we obtain
	\[
	\dot\Phi(x(t))
	=
	\langle\lambda,\dot x(t)\rangle
	\geq
	\inf_{v\in V(x(t))}\langle\lambda,v\rangle
	\geq
	\alpha',
	\]
	which proves \eqref{eq:phi-dot}. Integrating over \([0,t]\) gives
	\eqref{eq:phi-bound}.
	
	It remains to prove that STLC fails. Let \(\mathcal{N}_{x_e}\) be an
	arbitrary open neighborhood of \(x_e\). Set
	\[
	\mathcal{N}'_{x_e}:=\mathcal{N}_{x_e}\cap B_\rho(x_e),
	\]
	which is an open neighborhood of \(x_e\). Choose \(r_0>0\) such that
	\[
	\overline{B_{r_0}(x_e)}\subset \mathcal N'_{x_e}.
	\]
	Define
	\[
	L :=
	\sup_{x\in\overline{B_\rho(x_e)}}\sup_{v\in V(x)}\|v\|<\infty.
	\]
	This quantity is finite by compactness of the closed ball
	\(\overline{B_\rho(x_e)}\), compact-valuedness of \(V(\cdot)\), and
	Hausdorff continuity (Lemma~\ref{lem:V-properties}). Set
	\[
	\varepsilon_0:=\min\left\{\frac{r_0}{2L},1\right\}>0.
	\]
	
	We claim that every admissible trajectory \(x(\cdot)\) with
	\(x(0)=x_e\), remaining in \(\mathcal{N}_{x_e}\) on
	\([0,\varepsilon]\), with \(0<\varepsilon\leq\varepsilon_0\), actually
	remains in \(B_\rho(x_e)\) on \([0,\varepsilon]\). Suppose otherwise,
	and let \(\tau\) be the first time at which
	\(\|x(\tau)-x_e\|=r_0\). For all \(t\in[0,\tau]\), the trajectory lies
	in \(\overline{B_{r_0}(x_e)}\subset B_\rho(x_e)\), so
	\(\|\dot x(t)\|\leq L\) for almost every \(t\in[0,\tau]\). Hence
	\[
	r_0
	=
	\|x(\tau)-x_e\|
	\leq
	\int_0^\tau \|\dot x(s)\|\,ds
	\leq
	L\tau
	\leq
	L\varepsilon_0
	\leq
	\frac{r_0}{2},
	\]
	a contradiction. Therefore every such trajectory remains in
	\(B_\rho(x_e)\), and the barrier estimate \eqref{eq:phi-bound} applies.
	
	Now fix \(0<\varepsilon\leq\varepsilon_0\) and any candidate radius
	\(r(\varepsilon)>0\). Let
	\[
	\delta:=\frac{1}{2}\min\{r(\varepsilon),r_0\}>0
	\]
	and define
	\[
	y:=x_e-\delta\,\frac{\lambda}{\|\lambda\|}.
	\]
	Then
	\[
	y\in B_{r(\varepsilon)}(x_e)\cap B_{r_0}(x_e)
	\subset \mathcal{N}'_{x_e},
	\]
	and
	\[
	\Phi(y)
	=
	\Phi(x_e)-\delta\|\lambda\|
	<
	\Phi(x_e).
	\]
	By the preceding argument, any admissible trajectory starting from
	\(x_e\) and remaining in \(\mathcal{N}_{x_e}\) on
	\([0,\varepsilon]\) remains in \(B_\rho(x_e)\). Therefore
	\eqref{eq:phi-bound} gives, for every \(t\in(0,\varepsilon]\),
	\[
	\Phi(x(t))
	\geq
	\Phi(x_e)+\alpha't
	>
	\Phi(x_e)
	>
	\Phi(y).
	\]
	Thus \(y\) cannot be reached from \(x_e\) inside
	\(\mathcal{N}_{x_e}\) in time \(\leq\varepsilon\), although
	\(y\in B_{r(\varepsilon)}(x_e)\). This contradicts the STLC inclusion
	of Definition~\ref{def:stlc}. Since the candidate neighborhood
	\(\mathcal{N}_{x_e}\) was arbitrary, STLC fails at \(x_e\).
\end{proof}

Theorem~\ref{thm:no-balancing-nonstlc} is first-order, quantitative,
and robust. It is first-order because a single integration of the
velocity inequality suffices. It is quantitative because the escape
rate \(\alpha'\) is tied to the distance
\(\alpha=\operatorname{dist}(0,V(x_e))\). It is robust because the
separation persists throughout \(B_\rho(x_e)\) by local
uniformization.

\subsection{Discussion}
\label{subsec:caseB-discussion}

\begin{remark}[Comparison with Brammer's necessary condition]
	\label{rem:brammer-comparison}
	Theorem~\ref{thm:no-balancing-nonstlc} differs from Brammer's
	separating-hyperplane argument~\cite[Lemma~3.1]{brammer1972controllability}
	in three respects. First, Brammer separates the origin from the
	time-integrated reachable set
	\[
	\mathcal{R}_\infty=\bigcup_{t>0}\mathcal{R}(t),
	\]
	whereas the present obstruction separates the origin from the
	instantaneous feasible velocity set \(V(x_e)\). Second, Brammer's
	proof is intrinsically linear and uses spectral decompositions and
	asymptotic properties of the linear flow, whereas the present argument
	uses only convex geometry and set-valued continuity. Third, Brammer
	works under a balancing hypothesis of the form
	\(\Omega\cap\ker B\neq\emptyset\), while the present result addresses
	the complementary no-balancing regime \(\mathcal{B}(x_e)=\emptyset\).
\end{remark}

\begin{remark}[Time estimates and saturated controls]
	\label{rem:local-time-estimates}
	The lower bound \(\alpha'\) in Lemma~\ref{lem:uniformization} gives a
	lower bound on \(\dot\Phi\). An upper bound is also available. Define
	\[
	\beta:=
	\sup_{x\in\overline{B_\rho(x_e)}}\sup_{v\in V(x)}
	\langle\lambda,v\rangle .
	\]
	By compactness of \(\overline{B_\rho(x_e)}\), compact-valuedness and
	Hausdorff continuity of \(V(\cdot)\), and Berge's maximum theorem,
	one has \(\beta<\infty\). Hence every
	admissible trajectory remaining in \(B_\rho(x_e)\) satisfies
	\[
	\alpha'
	\leq
	\dot\Phi(x(t))
	\leq
	\beta
	\qquad\text{for a.e. }t.
	\]
	If \(y\in B_\rho(x_e)\) is reached from \(x_e\) in time \(T\), inside
	\(B_\rho(x_e)\), and if \(\Phi(y)>\Phi(x_e)\), then
	\[
	\frac{\Phi(y)-\Phi(x_e)}{\beta}
	\leq
	T
	\leq
	\frac{\Phi(y)-\Phi(x_e)}{\alpha'}.
	\]
	Moreover, for each fixed \(x\), the extrema of
	\[
	u\mapsto
	\left\langle
	\lambda,
	f_0(x)+\sum_i u_i g_i(x)
	\right\rangle
	\]
	over the box \(\Omega(x)\) are attained at vertices of \(\Omega(x)\).
	Thus, under the usual nondegeneracy assumptions of the Pontryagin
	maximum principle, time-optimal controls for the corresponding local
	problem are expected to saturate input bounds except possibly on
	switching surfaces~\cite{bonnard2003singular}. This is consistent with
	saturated tension profiles observed in rest-to-rest CDPR
	planning~\cite{ida2019}.
\end{remark}

\begin{remark}[Finite-time reachability beyond STLC]
	\label{rem:exit-and-ftlc}
	Theorem~\ref{thm:no-balancing-nonstlc} rules out local instantaneous
	reversibility inside \(B_\rho(x_e)\), but it does not rule out
	finite-time reachability through trajectories that leave this
	obstruction neighborhood. Any recovery of controllability after the
	loss of STLC must therefore be mediated by nonlocal admissible
	excursions, as formalized below.
\end{remark}

\subsection{Beyond STLC: Admissible Excursions and Finite-Time Controllability}
\label{subsec:excursions}

Theorem~\ref{thm:no-balancing-nonstlc} shows that, in the no-balancing
regime, the barrier functional
\[
\Phi(x)=\langle\lambda,x\rangle
\]
is strictly increasing along every admissible trajectory remaining in
\(B_\rho(x_e)\). This excludes infinitesimal reversibility, but not
finite-time reachability through admissible trajectories that exit the
local obstruction neighborhood, exploit a region where the
feasible-velocity geometry changes, and return toward a target close
to \(x_e\).

\begin{definition}[Finite-time local controllability with admissible excursions]
	\label{def:excursion-controllability}
	Let \(x_e\) be a reference state satisfying the obstruction of
	Theorem~\ref{thm:no-balancing-nonstlc}, and let \(B_\rho(x_e)\) be an
	associated obstruction neighborhood. The system
	\((\mathcal{F},\Omega)\) is said to be \emph{locally finite-time
		controllable with admissible excursions} at \(x_e\) if there exist an
	open neighborhood \(W_0\) of \(x_e\) and a finite time \(T>0\) such
	that, for every target \(y\in W_0\setminus\{x_e\}\), there exists an
	admissible trajectory \(x:[0,T_y]\to\mathbb{R}^n\), with
	\(T_y\leq T\), satisfying
	\[
	x(0)=x_e,
	\qquad
	x(T_y)=y.
	\]
	Moreover, for every target
	\[
	y\in W_0^-:=\{y\in W_0:\Phi(y)<\Phi(x_e)\},
	\]
	any such reaching trajectory necessarily leaves the obstruction
	neighborhood:
	\[
	x([0,T_y])\not\subset B_\rho(x_e).
	\]
\end{definition}

The time horizon \(T\) in
Definition~\ref{def:excursion-controllability} is finite but not
required to be arbitrarily small. This is the essential distinction
from STLC. The excursion requirement is not imposed artificially:
Theorem~\ref{thm:no-balancing-nonstlc} implies that any trajectory
confined to \(B_\rho(x_e)\) satisfies
\[
\Phi(x(t))\geq \Phi(x_e)+\alpha't>\Phi(x_e),
\qquad t>0.
\]
Hence a target satisfying \(\Phi(y)<\Phi(x_e)\), if reachable at all,
must be reached by a trajectory that exits the obstruction
neighborhood.

\begin{remark}[Relation to Hermann--Krener weak accessibility]
	\label{rem:HK-comparison}
	Definition~\ref{def:excursion-controllability} is a strong finite-time
	local controllability requirement. A weaker formulation, closer to the
	weak accessibility viewpoint of Hermann and
	Krener~\cite{hermann1977nonlinear}, would require only that the
	finite-time reachable set have nonempty interior near \(x_e\), for
	example
	\[
	\operatorname{int}\!\left(
	\mathcal{R}_{\mathcal{F}}^\Omega(\leq T,x_e)\cap W_0
	\right)\neq\emptyset .
	\]
	The present definition is stronger: it requires coverage of a whole
	neighborhood \(W_0\), except for the reference point itself.
\end{remark}

\begin{remark}[Lower bound on excursion time]
	\label{rem:excursion-time}
	Before leaving \(B_\rho(x_e)\), every admissible trajectory satisfies
	\[
	\Phi(x(t))\geq \Phi(x_e)+\alpha't .
	\]
	Independently, define
	\[
	L_\rho:=
	\sup_{x\in\overline{B_\rho(x_e)}}\sup_{v\in V(x)}\|v\|.
	\]
	As above, \(L_\rho<\infty\). If \(t_{\rm exit}\) is the first exit time
	from \(B_\rho(x_e)\), then
	\[
	\rho
	=
	\|x(t_{\rm exit})-x_e\|
	\leq
	\int_0^{t_{\rm exit}}\|\dot x(t)\|\,dt
	\leq
	L_\rho t_{\rm exit}.
	\]
	Consequently,
	\begin{equation}
		\label{eq:exit-time-bound}
		t_{\rm exit}\geq \frac{\rho}{L_\rho}.
	\end{equation}
	Thus admissible excursions cannot occur in arbitrarily small time,
	which is consistent with the failure of STLC at \(x_e\).
\end{remark}

\begin{remark}[Structure of admissible excursions]
	\label{rem:excursion-structure}
	Let \(\lambda_e\) denote the separating covector associated with the
	Case~(B) reference state \(x_e\). A natural mechanism for admissible
	excursions is to reach a region where the original covector no longer
	defines a monotone barrier, namely a region where
	\[
	\inf_{v\in V(x)}\langle\lambda_e,v\rangle<0.
	\]
	In such a region, the system admits instantaneous velocities that
	decrease the original barrier functional. This may occur when the
	trajectory reaches a Case~(A) or Case~(C) region, where
	\(0\in V(\cdot)\), but this is not necessary: the original separating
	covector may also cease to be monotone inside another Case~(B) region
	with a different feasible-velocity geometry.
	
	The prototypical excursion has the form
	\[
	x_e\longrightarrow z\longrightarrow y,
	\]
	where \(z\notin B_\rho(x_e)\) lies in a region where the original
	barrier can be relaxed or reversed, and \(y\) is a target close to
	\(x_e\).
\end{remark}

\begin{remark}[Computational perspective for the CDPR model]
	\label{rem:excursion-cdpr}
	The planar two-cable CDPR of Section~\ref{sec:application} provides a
	concrete setting for studying admissible excursions. At a Case~(B)
	reference configuration, the covector \(\lambda\), rate \(\alpha'\),
	radius \(\rho\), and velocity bound \(L_\rho\) can be computed from the
	model data. Case~(A) regions are natural candidates for the return leg
	of an excursion, since the shifted interior-balancing system of
	Corollary~\ref{cor:interior-accessibility} becomes available there.
	The numerical construction and verification of excursion trajectories
	\[
	x_e\longrightarrow z\longrightarrow y
	\]
	for specific Case~(B) configurations is left for future work.
\end{remark}

\section{Application to a Planar Underactuated CDPR}
\label{sec:application}

This section illustrates the proposed framework on a planar suspended
cable-driven parallel robot (CDPR). The example is representative of
the class targeted in this paper: the actuation is strictly positive,
since cables can only pull, and the system is underactuated, since two
cable tensions control a six-dimensional state.

\subsection{Model and Positive-Tension Constraints}
\label{subsec:cdpr-model}

Consider a homogeneous rigid equilateral triangular platform of side
length \(b\) and mass \(m_p\), evolving in the vertical plane under
gravity. The upper vertices \(A_1,A_2\) are connected to fixed ceiling
anchors
\[
D_1=(-d,0),\qquad D_2=(d,0),
\]
with \(d>b/2\), while the lower vertex is unactuated. The generalized
coordinates are
\[
q=(x,y,\phi)^\top\in\mathbb{R}^3,
\qquad
\xi=(q,\dot q)^\top\in\mathbb{R}^6.
\]
In the body-fixed frame centered at the platform center of mass \(G\),
the upper attachment points are
\[
a_i^{\rm loc}=(\mp b/2,h)^\top,
\qquad
h:=\frac{b}{2\sqrt{3}},
\qquad i=1,2,
\]
and the moment of inertia about \(G\) is $I_G=\frac{m_p b^2}{12}.$

\begin{figure}[t]
	\centering
	\includegraphics[width=\linewidth]{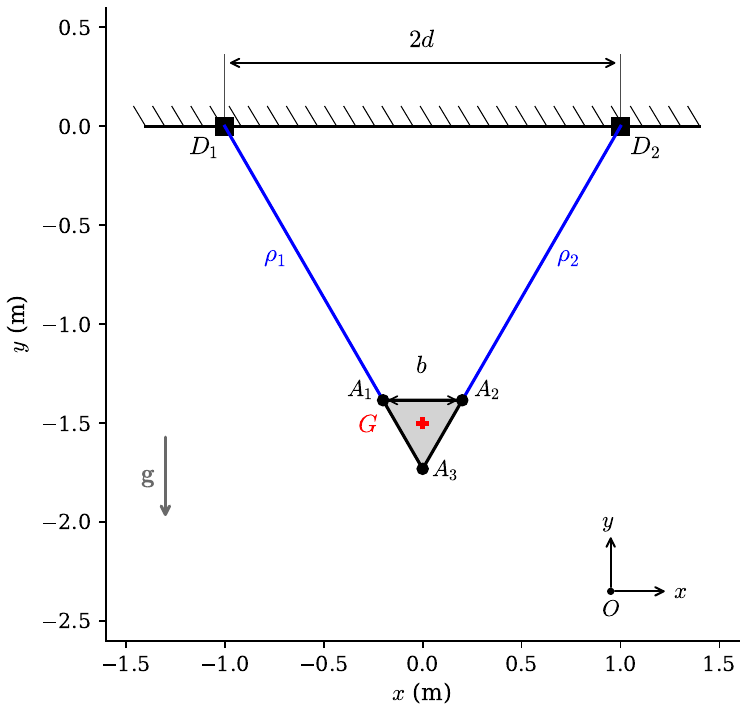}
	\caption{Planar two-cable CDPR with generalized coordinates
		\(q=(x,y,\phi)^\top\).}
	\label{fig:cdpr-schema}
\end{figure}

Cables transmit tensile forces only. Hence each physical tension
satisfies
\[
T_i\in[T_{\min},T_{\max}],
\qquad
0<T_{\min}<T_{\max},
\qquad i=1,2.
\]
The zero input is therefore excluded by construction. This places the
CDPR in the zero-excluding input regime considered throughout the
paper.

Let
\[
R(\phi)=
\begin{pmatrix}
	\cos\phi & -\sin\phi\\
	\sin\phi & \cos\phi
\end{pmatrix}.
\]
The position of the attachment point \(A_i\) is
\[
A_i(q)=(x,y)^\top+R(\phi)a_i^{\rm loc},
\]
and the cable vector from \(A_i\) to \(D_i\) is
\begin{equation}
	\label{eq:rho-def}
	\rho_i(q):=D_i-A_i(q),
	\qquad
	\ell_i(q):=\|\rho_i(q)\|.
\end{equation}
The model is considered on an operational domain where
\(\ell_i(q)>0\) for \(i=1,2\).

\subsection{Control-Affine Form}
\label{subsec:cdpr-affine-form}

The inertia matrix is $M=\operatorname{diag}(m_p,m_p,I_G),$
and the gravity vector is $s=(0,-m_p g,0)^\top.$
The cable force applied at \(A_i\) is $F_i=T_i\,\frac{\rho_i(q)}{\ell_i(q)}.$
Let $r_i(q):=R(\phi)a_i^{\rm loc}$
be the moment arm from the center of mass to \(A_i\). The scalar moment
generated by cable \(i\) is
\begin{equation}
	\label{eq:moment}
	\mathcal{M}_i(q)
	:=
	r_{i,x}(q)\rho_{i,y}(q)-r_{i,y}(q)\rho_{i,x}(q).
\end{equation}
The Euler--Lagrange equations can be written as
\[
M\ddot q=s+B_T(q)T,
\]
where \(T=(T_1,T_2)^\top\) and
\[
B_T(q)
=
\begin{pmatrix}
	\rho_{1,x}/\ell_1 & \rho_{2,x}/\ell_2\\
	\rho_{1,y}/\ell_1 & \rho_{2,y}/\ell_2\\
	\mathcal{M}_1/\ell_1 & \mathcal{M}_2/\ell_2
\end{pmatrix}.
\]

For the symbolic Lie-bracket computations, it is convenient to use the
normalized tensions
\begin{equation}
	\label{eq:normalized-tensions}
	\tau_i:=\frac{T_i}{\ell_i(q)},
	\qquad i=1,2.
\end{equation}
With \(\tau=(\tau_1,\tau_2)^\top\), the dynamics become
\[
M\ddot q=s+B(q)\tau,
\]
where
\begin{equation}
	\label{eq:B-matrix}
	B(q)
	=
	\begin{pmatrix}
		\rho_{1,x} & \rho_{2,x}\\
		\rho_{1,y} & \rho_{2,y}\\
		\mathcal{M}_1 & \mathcal{M}_2
	\end{pmatrix}.
\end{equation}
Thus the state dynamics admit the control-affine form
\begin{equation}
	\label{eq:cdpr-control-affine}
	\dot\xi=f_0(\xi)+\tau_1g_1(\xi)+\tau_2g_2(\xi),
\end{equation}
with
\begin{equation}
	\label{eq:f0-gi}
	f_0(\xi)=
	\begin{pmatrix}
		\dot q\\
		M^{-1}s
	\end{pmatrix}
	=
	\begin{pmatrix}
		\dot q\\
		0\\
		-g\\
		0
	\end{pmatrix},
	\qquad
	g_i(\xi)=
	\begin{pmatrix}
		0_3\\
		M^{-1}B_{:,i}(q)
	\end{pmatrix}.
\end{equation}
The vector fields are real-analytic on the operational domain.

The physical bounds \(T_i\in[T_{\min},T_{\max}]\) become the
state-dependent box
\begin{equation}
	\label{eq:cdpr-Omega}
	\Omega(q)
	=
	\prod_{i=1}^{2}
	\left[
	\frac{T_{\min}}{\ell_i(q)},
	\frac{T_{\max}}{\ell_i(q)}
	\right].
\end{equation}
Since \(\ell_i(q)>0\) and depends continuously on \(q\), the bound
functions in \eqref{eq:cdpr-Omega} are continuous and strictly
positive on the operational domain. Hence Assumption~\ref{ass:1}
holds. Moreover,
\[
n=6,\qquad m=2,\qquad \operatorname{rank}G(\xi)\leq 2<6,
\]
so the feasible velocity set \(V(\xi)\) has empty interior in
\(\mathbb{R}^6\). The ambient condition
\(0\in\operatorname{int}V(\xi)\) is therefore vacuous, while the
input-space \(\mathcal{B}\)-classification remains meaningful.

\subsection{Static Balancing Tensions}
\label{subsec:cdpr-equilibria}

A static reference state has the form $\xi_e=(q_e,0)^\top.$
At such a point, the balance equation associated with
\eqref{eq:cdpr-control-affine} reduces to
\begin{equation}
	\label{eq:static-balance}
	B(q_e)\tau_e=\Lambda,
	\qquad
	\Lambda:=(0,m_p g,0)^\top.
\end{equation}
This is a system of three scalar equations in two unknowns
\(\tau_e=(\tau_{e,1},\tau_{e,2})^\top\). Therefore a static balance
exists only if \(\Lambda\) belongs to the image of \(B(q_e)\).

Let
\[
\nu(q):=B_{:,1}(q)\times B_{:,2}(q).
\]
Then the compatibility condition is
\[
\Lambda^\top\nu(q_e)=0.
\]
Since \(\Lambda\) has only its second component nonzero, this reduces
to
\begin{equation}
	\label{eq:compatibility}
	\nu_y(q_e)
	=
	\mathcal{M}_1(q_e)\rho_{2,x}(q_e)
	-
	\rho_{1,x}(q_e)\mathcal{M}_2(q_e)
	=
	0.
\end{equation}
Accordingly, the static equilibrium manifold is
\begin{equation}
	\label{eq:equilibrium-manifold}
	\Sigma
	:=
	\{q\in\mathbb{R}^3:\nu_y(q)=0\}.
\end{equation}
On regular points of \(\Sigma\), this set is a two-dimensional
submanifold of the configuration space.

Assume now that the two cable directions are not collinear, so that
\[
\det B_{12}(q_e)
:=
\rho_{1,x}(q_e)\rho_{2,y}(q_e)
-
\rho_{1,y}(q_e)\rho_{2,x}(q_e)
\neq 0,
\]
where \(B_{12}\) is the \(2\times 2\) submatrix formed by the
translational rows of \(B(q_e)\). Then the first two equations of
\eqref{eq:static-balance} determine the unique balancing normalized
tensions:
\begin{equation}
	\label{eq:cramer-formula}
	\tau_{e,1}(q_e)
	=
	-\frac{m_p g\,\rho_{2,x}(q_e)}
	{\det B_{12}(q_e)},
	\quad
	\tau_{e,2}(q_e)
	=
	\frac{m_p g\,\rho_{1,x}(q_e)}
	{\det B_{12}(q_e)}.
\end{equation}
The third equation is satisfied precisely on \(\Sigma\). Therefore, at
a regular static reference state, the algebraic balancing set is the
singleton
\[
\mathcal{B}_e(\xi_e)=\{\tau_e(q_e)\}.
\]
The admissible balancing set is then
\[
\mathcal{B}(\xi_e)
=
\{\tau_e(q_e)\}\cap\Omega(q_e).
\]

Equivalently, in terms of physical tensions
\[
T_{e,i}(q_e):=\ell_i(q_e)\tau_{e,i}(q_e),
\]
the \(\mathcal{B}\)-classification is:
\[
\begin{array}{lll}
	\text{Case (A)} &\Longleftrightarrow&
	T_{e,i}(q_e)\in(T_{\min},T_{\max}),\quad i=1,2,\\[1mm]
	\text{Case (B)} &\Longleftrightarrow&
	\text{no admissible balancing tension exists},\\[1mm]
	\text{Case (C)} &\Longleftrightarrow&
	T_{e,i}(q_e)\in[T_{\min},T_{\max}],\ i=1,2,
	\text{ and at}\\
	& &\text{least one bound is active}.
\end{array}
\]
Thus the abstract classification of Section~\ref{sec:classification}
reduces, for the CDPR, to checking the position of the balancing cable
tensions relative to the admissible tension interval.

\subsection{Numerical Illustration of the \(\mathcal{B}\)-Classification}
\label{subsec:cdpr-numerical-classification}

We now illustrate the classification on the equilibrium manifold
\(\Sigma\). This numerical study is not intended as a trajectory-level
validation of finite-time controllability. Its purpose is to show that
the proposed \(\mathcal{B}\)-classification is computable and separates
the operating domain into the three regimes predicted by the theory.

We sample the operational set
\[
\mathcal{D}
:=
\{q_e\in\Sigma:\ |x_e|<d,\ |\phi_e|<\pi/2,\ y_e<-h\}.
\]
Away from the singular set of the parametrization, the compatibility
condition~\eqref{eq:compatibility} can be solved for \(y_e\) as a
function of \((x_e,\phi_e)\). Define
\[
\begin{aligned}
	N(x_e,\phi_e)
	:={}&
	\frac{b^2d}{4}\sin(2\phi_e)
	-bd^2\sin\phi_e
	-bhx_e
	+bx_e^2\sin\phi_e
	\\
	&\quad
	+2dh^2\sin\phi_e\cos\phi_e,
	\\[1mm]
	D(x_e,\phi_e)
	:={}&
	-bx_e\cos\phi_e
	+2dh\sin\phi_e .
\end{aligned}
\]
Then
\begin{equation}
	\label{eq:y-on-sigma}
	y_e(x_e,\phi_e)
	=
	-\frac{N(x_e,\phi_e)}{D(x_e,\phi_e)}.
\end{equation}
For each grid point \((x_e,\phi_e)\), we compute \(y_e\) from
\eqref{eq:y-on-sigma}, evaluate the balancing tensions from
\eqref{eq:cramer-formula}, recover the physical tensions
\[
T_{e,i}(q_e)=\ell_i(q_e)\tau_{e,i}(q_e),
\]
and classify the point according to Cases~(A), (B), and (C).

For the numerical illustration, we use
\[
b=0.5\,\mathrm{m},\quad
m_p=5\,\mathrm{kg},\quad
d=1.0\,\mathrm{m},\quad
g=9.81\,\mathrm{m/s^2}.
\]

\begin{table}[t]
	\centering
	\caption{Distribution of the \(\mathcal{B}\)-classification on
		\(\mathcal{D}\) for three actuator-bound scenarios. Sample size:
		\(|\mathcal{D}_h|=6536\) points.}
	\label{tab:classification-stats}
	\begin{tabular}{lcccc}
		\toprule
		\([T_{\min},T_{\max}]\) (N)
		& Region & Case (A) & Case (B) & Case (C) \\
		\midrule
		\multirow{3}{*}{\([15,50]\)}
		& On-axis  & \(81.5\%\) & \(18.0\%\) & \(0.5\%\) \\
		& Off-axis & \(27.3\%\) & \(68.8\%\) & \(3.9\%\) \\
		& Total    & \(29.0\%\) & \(67.3\%\) & \(3.8\%\) \\
		\midrule
		\multirow{3}{*}{\([27,50]\)}
		& On-axis  & \(47.5\%\) & \(36.5\%\) & \(16.0\%\) \\
		& Off-axis & \(6.3\%\)  & \(91.5\%\) & \(2.2\%\) \\
		& Total    & \(7.5\%\)  & \(89.8\%\) & \(2.6\%\) \\
		\midrule
		\multirow{3}{*}{\([30,40]\)}
		& On-axis  & \(19.0\%\) & \(75.0\%\) & \(6.0\%\) \\
		& Off-axis & \(0.4\%\)  & \(99.1\%\) & \(0.4\%\) \\
		& Total    & \(1.0\%\)  & \(98.4\%\) & \(0.6\%\) \\
		\bottomrule
	\end{tabular}
\end{table}

Table~\ref{tab:classification-stats} shows how the actuator bounds
shape the feasible static workspace. As the admissible tension interval
narrows, the Case~(A) region decreases from \(29.0\%\) of the sampled
domain for \([15,50]\) N to \(1.0\%\) for \([30,40]\) N. The
no-balancing regime becomes dominant under restrictive bounds,
reaching \(98.4\%\) of the sampled domain in the last scenario.

The symmetry axis \(\{x_e=0,\phi_e=0\}\) is significantly more
favorable than the off-axis region. This reflects the symmetric sharing
of the platform weight between the two cables. Off-axis configurations
induce asymmetric cable tensions, which more easily violate either the
lower or the upper actuator bound.

Finally, the position of \(T_{\min}\) relative to the asymptotic
symmetric tension \(m_p g/2\) explains the qualitative changes on the
axis. When \(T_{\min}<m_p g/2\), infeasibility on the axis is mainly
caused by excessive tension near the anchors. When
\(T_{\min}>m_p g/2\), low-tension infeasibility also appears, producing
additional boundary transitions between Cases~(A), (B), and (C).

\subsection{Implications for Accessibility and STLC}
\label{subsec:cdpr-implications}

The classification has direct controllability implications.

At a Case~(A) reference state \(\xi_e=(q_e,0)\), the balancing tension
\(\tau_e(q_e)\) lies in \(\operatorname{int}\Omega(q_e)\). Hence the
input shift
\[
\tau=\tau_e(q_e)+\eta
\]
is uniformly admissible on a neighborhood of \(\xi_e\). The shifted
system has an equilibrium at \(\xi_e\), and the results of
Section~\ref{sec:case-A} apply. In particular, if LARC holds at
\(\xi_e\), then the CDPR is locally accessible from \(\xi_e\) under the
positive-tension constraint. If, in addition, the shifted family
satisfies Sussmann's \(\mathcal{S}(\theta)\)-condition, then the CDPR
is STLC at \(\xi_e\). The Lie-bracket computations used for this
verification are reported in
the supplementary material.

At a Case~(B) reference state, no admissible cable tension balances the
dynamics. Equivalently,
\[
0\notin V(\xi_e).
\]
Theorem~\ref{thm:no-balancing-nonstlc} then gives a separating
covector \(\lambda\), a neighborhood \(B_\rho(\xi_e)\), and a rate
\(\alpha'>0\) such that the barrier functional
\[
\Phi(\xi)=\langle\lambda,\xi\rangle
\]
satisfies
\[
\dot\Phi(\xi(t))\geq \alpha'>0
\]
along every admissible trajectory remaining in \(B_\rho(\xi_e)\).
Therefore the CDPR is not STLC at such Case~(B) configurations.

This illustrates the role of the \(\mathcal{B}\)-classification in a
rank-deficient mechanical system: although
\(\operatorname{int}V(\xi_e)=\emptyset\) for every reference state, the
input-space classification still distinguishes configurations where
the classical shifted-control tools apply from configurations where a
local one-sided barrier obstructs STLC.

\subsection{Certified Numerical Validation of the No-Balancing Barrier}
\label{subsec:cdpr-barrier-verification}

We now provide a certified numerical validation of the no-balancing
obstruction in Theorem~\ref{thm:no-balancing-nonstlc}. The purpose is
twofold: first, to compute a rigorous lower bound for the barrier rate
on a continuous neighborhood of a representative Case~(B) state; and
second, to illustrate this certificate along extremal, adversarial, and
randomly switched admissible trajectories.

\subsubsection*{Reference state and separating geometry}

Consider the reference state
\[
\xi_e=(0,-1.5,0,0,0,0)^\top
\]
on the symmetry axis of \(\Sigma\), with physical tension bounds
\[
T_i\in[30,40]~\mathrm{N},\qquad i=1,2.
\]
The cable lengths at \(q_e\) satisfy
\[
\ell_1(q_e)=\ell_2(q_e)\simeq1.549~\mathrm{m}.
\]
Cramer's formula~\eqref{eq:cramer-formula} gives the balancing
physical tensions
\[
T_{e,1}=T_{e,2}\simeq28.0~\mathrm{N}<T_{\min}.
\]
Hence \(\mathcal{B}(\xi_e)=\emptyset\), and \(\xi_e\) belongs to
Case~(B).

The Euclidean projection of the origin onto \(V(\xi_e)\) is computed
by solving
\[
v^\star
=
\operatorname*{arg\,min}_{v\in V(\xi_e)}\|v\|.
\]
Since \(V(\xi_e)\) is the affine image of a two-dimensional box, the
projection problem can be solved exactly by examining its interior,
edges, and vertices. The optimum is attained at the minimum-tension
input
\[
T^\star=(30,30)^\top~\mathrm{N},
\]
and yields
\[
v^\star
=
(0,0,0,0,0.6902147,0)^\top,
\]
\[
\alpha=\|v^\star\|
=0.6902147~\mathrm{m\,s^{-2}}.
\]
Therefore,
\[
\lambda=\frac{v^\star}{\|v^\star\|}=e_5,
\qquad
\Phi(\xi)=\langle\lambda,\xi\rangle=\dot y.
\]

\subsubsection*{Interval certificate on a state neighborhood}

Because the state components have different physical units, we define
the dimensionless scaled norm
\[
\|\xi-\xi_e\|_S
:=
\|S^{-1}(\xi-\xi_e)\|_2,
\]
with
\[
S
=
\operatorname{diag}
\left(
1~\mathrm{m},
1~\mathrm{m},
1~\mathrm{rad},
1~\mathrm{m\,s^{-1}},
1~\mathrm{m\,s^{-1}},
1~\mathrm{rad\,s^{-1}}
\right).
\]
We consider the neighborhood
\[
\mathcal{N}_{0.05}
:=
\{\xi:\|\xi-\xi_e\|_S\leq0.05\}.
\]
Its configuration-space projection is contained in the box
\[
|x|\leq0.05~\mathrm{m},\qquad
|y+1.5|\leq0.05~\mathrm{m},\qquad
|\phi|\leq0.05~\mathrm{rad}.
\]

For \(\lambda=e_5\), the quantity
\[
\langle\lambda,f_0(\xi)+G_T(\xi)T\rangle
\]
is the vertical acceleration. It depends only on the configuration
variables and is affine in \(T\). Therefore its minimum over the
tension box is attained at one of the four vertices of
\([30,40]^2\).

We evaluate the resulting continuous minimization problem by outward-
rounded interval arithmetic combined with adaptive branch-and-bound
over the configuration box. With \(10^5\) terminal boxes, the
computation gives
\begin{equation}
	\label{eq:certified-alpha-prime}
	0.5865936
	\leq
	\min_{\substack{\xi\in\mathcal{N}_{0.05}\\
			T\in[30,40]^2}}
	\langle\lambda,f_0(\xi)+G_T(\xi)T\rangle
	\leq
	0.5906254
	\;\mathrm{m\,s^{-2}}
\end{equation}
Consequently, the certified rate
\[
\alpha'_{\mathrm{cert}}
:=
0.5865936~\mathrm{m\,s^{-2}}
\]
is valid for every admissible input and every state in
\(\mathcal{N}_{0.05}\). Theorem~\ref{thm:no-balancing-nonstlc}
therefore yields
\begin{equation}
	\label{eq:certified-barrier-CDPR}
	\Phi(\xi(t))-\Phi(\xi_e)
	\geq
	\alpha'_{\mathrm{cert}}t
\end{equation}
along every admissible trajectory that starts from \(\xi_e\) and
remains in \(\mathcal{N}_{0.05}\).

The interval calculation also gives the scaled velocity bound
\[
L_{0.05}\leq39.0894~\mathrm{s^{-1}},
\]
and hence the excursion-time estimate
\[
t_{\mathrm{exit}}
\geq
\frac{0.05}{L_{0.05}}
=1.279\times10^{-3}~\mathrm{s}.
\]
The bound is conservative because it is computed on the coordinate
box containing \(\mathcal{N}_{0.05}\), rather than on the scaled ball
itself.

\subsubsection*{Trajectory-level illustration}

We complement the interval certificate with \(1005\) admissible
profiles: the tension-box vertices, a state-dependent adversarial
input, uniformly sampled piecewise-constant controls, and randomly
switched bang--bang controls. All trajectories satisfy
\eqref{eq:certified-barrier-CDPR} while they remain in
\(\mathcal N_{0.05}\); the smallest observed margin is
\(2.07\times10^{-4}>0\). These simulations illustrate, but do not
establish, the universal certificate. The minimum-tension/adversarial
trajectory forms the lower envelope, while mixed and maximum tensions
leave the certified neighborhood sooner. Integration tolerances,
switching parameters, and trajectory data are provided in the
supplementary material.

\begin{figure*}[t]
	\centering
	\includegraphics[width=0.485\textwidth]
	{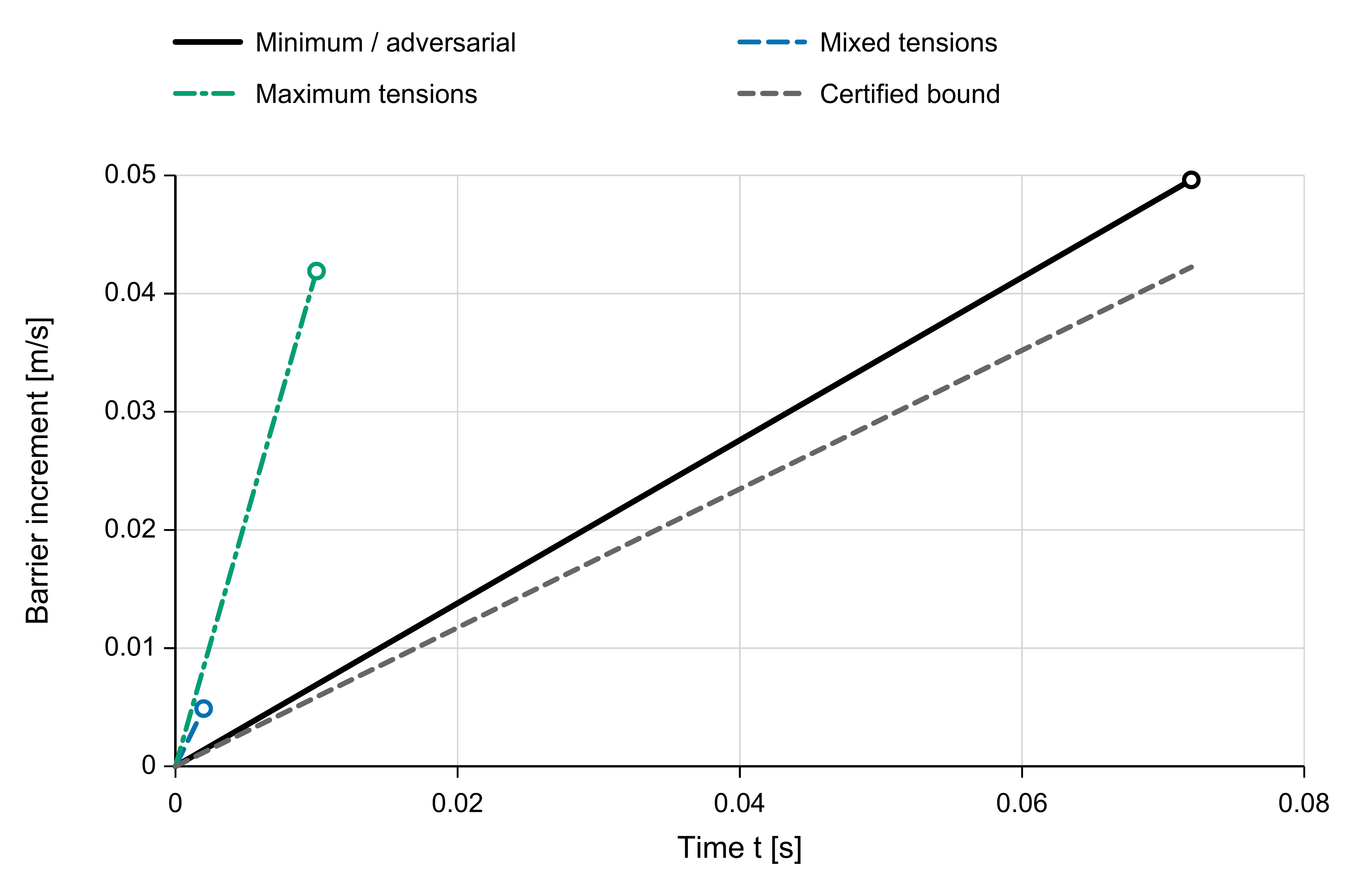}
	\hfill
	\includegraphics[width=0.485\textwidth]
	{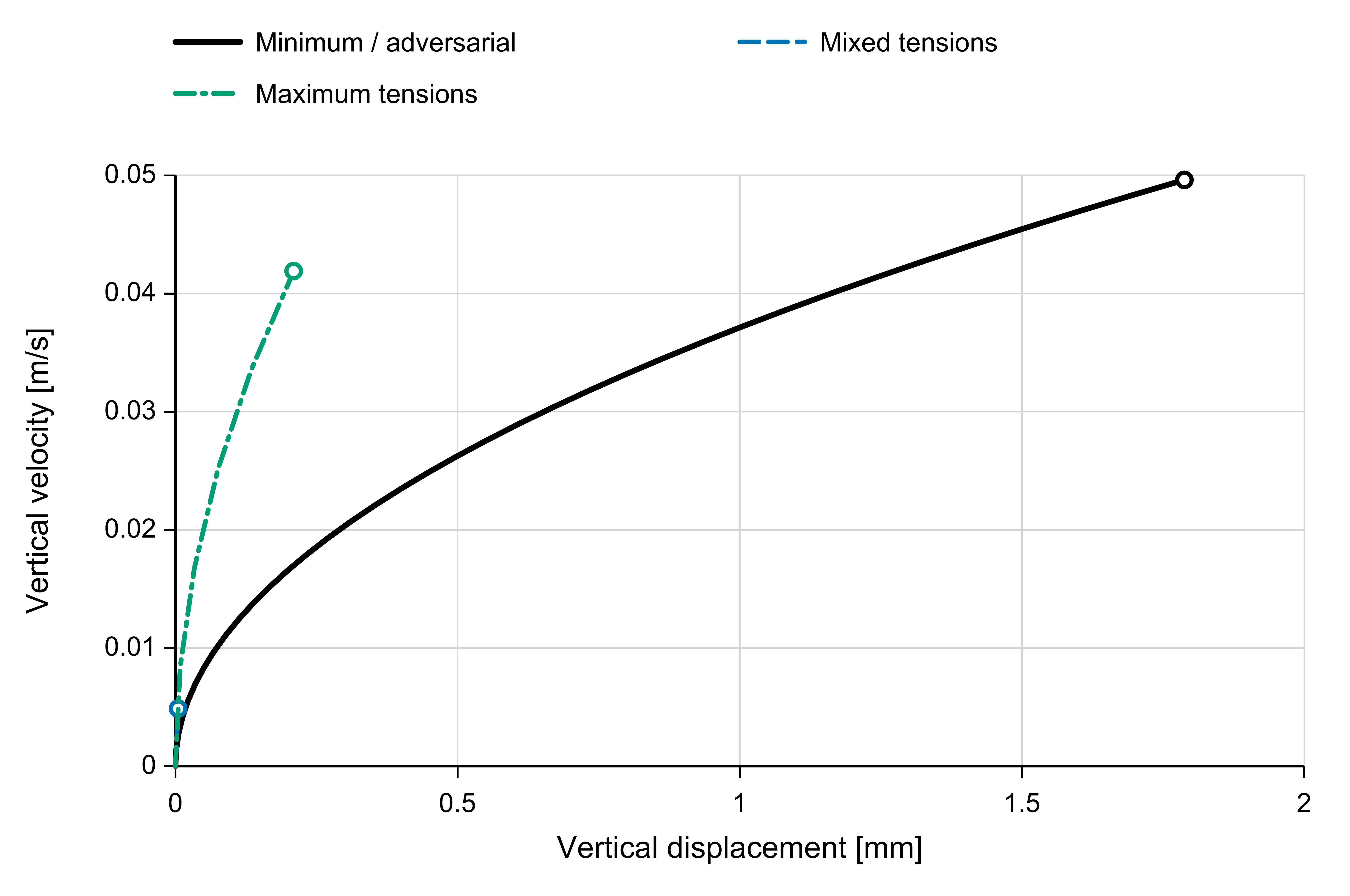}
	\caption{Certified barrier under \(T_i\in[30,40]\)~N.
		\emph{Left}: barrier increments and certified lower bound.
		\emph{Right}: vertical displacement--velocity projections. The
		interval computation certifies all admissible inputs on
		\(\mathcal N_{0.05}\).}
	\label{fig:certified-barrier-validation}
\end{figure*}

\subsection{Boundary Configurations and Admissible Excursions}
\label{subsec:cdpr-boundary-excursions}

For a regular static configuration, Case~(C) occurs when both
balancing tensions belong to \([T_{\min},T_{\max}]\) and at least one
bound is active. It is therefore the transition between
interior-balancing and no-balancing regions and requires a one-sided
tangent-cone analysis. The certified barrier is local and does not
exclude finite-time reachability through excursions leaving
\(\mathcal N_{0.05}\); constructing such excursions remains open.

\section{Conclusion}
\label{sec:conclusion}
 
This paper developed an input-space framework for local
controllability under state-dependent input constraints that strictly
exclude zero. The admissible balancing set \(\mathcal B(x_e)\)
distinguishes two qualitatively different regimes. An interior
balancing input permits a uniform local shift to a symmetric-control
problem, whereas the absence of any admissible balancing input places
the feasible-velocity set strictly on one side of a separating
hyperplane.

In the no-balancing regime, the separating covector defines a linear
barrier functional that increases at a uniform positive rate along
every admissible trajectory remaining near the reference state. This
provides a quantitative obstruction to STLC without requiring the
reference state to be an admissible controlled equilibrium, and
extends Brammer's separating geometry to a nonlinear,
state-dependent setting.

The planar two-cable CDPR illustrates both regimes. Interior
balancing, combined with appropriate bracket conditions, certifies
STLC. Under the restrictive bounds \([30,40]\) N, the same reference
state belongs to the no-balancing regime, for which interval
branch-and-bound certifies
\(\alpha'_{\rm cert}=0.5865936~\mathrm{m\,s^{-2}}\).
Simulations with extremal, adversarial, and randomly switched tensions
illustrate this certificate. Future work will address the
boundary-balancing regime and finite-time reachability through
admissible excursions.


\bibliographystyle{IEEEtran}
\bibliography{bibtac}

\begin{IEEEbiography}
[{\includegraphics[width=1in,height=1.25in,clip,keepaspectratio]
{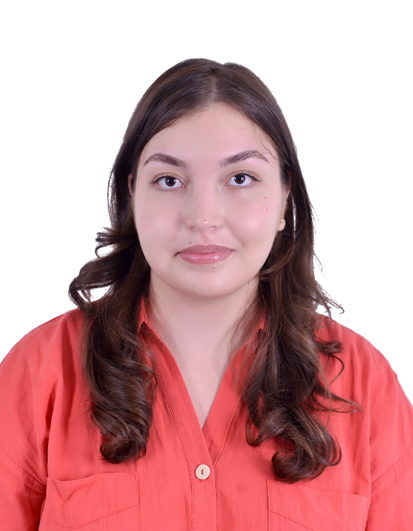}}]
{Amal Bouazza}
received the M.S. degree in applied mathematics and is pursuing a joint Ph.D. degree in nonlinear control with the International University of Rabat, Morocco, and the University of Lorraine, CNRS--CRAN, France. Her research interests include nonlinear and geometric control, constrained-system controllability, and underactuated cable-driven parallel robots using model- and AI-based approaches.
\end{IEEEbiography}
\begin{IEEEbiography}
[{\includegraphics[width=1in,height=1.25in,clip,keepaspectratio]
{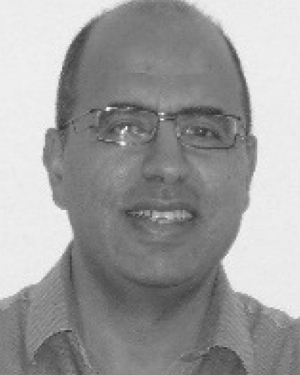}}]
{Mohamed Boutayeb}
received the engineering degree from EHTP, Morocco, in 1988, and the Ph.D. and HDR degrees in automatic control from the University of Lorraine, France, in 1992 and 2000, respectively. Since 2002, he has been a Full Professor with ICube, CNRS--University of Strasbourg, and CRAN, CNRS--University of Lorraine. He held delegations at CNRS and INRIA, has managed several ANR-DGA projects, and has authored over 200 publications. He has held research visits at MIT, the Alexander von Humboldt Foundation, and Columbia University. His research interests include state estimation and control of dynamical systems.
\end{IEEEbiography}
\begin{IEEEbiography}
[{\includegraphics[width=1in,height=1.25in,clip,keepaspectratio]
{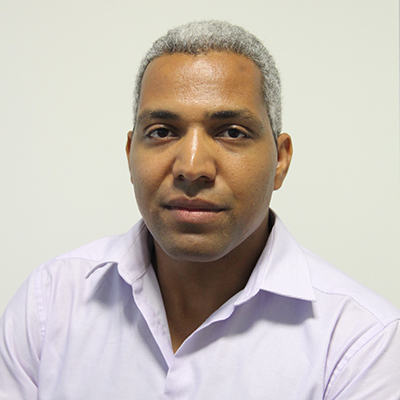}}]
{Mustapha Oudani}
received the joint Ph.D. degree in Computer Science and mathematics from Sidi Mohamed Ben Abdellah University, Fez, Morocco, and Le Havre Normandie University, France, and the HDR degree from the University of Angers, France. He is currently an Associate Professor with the School of Computer Science and Digital Engineering (ESIN), International University of Rabat, Morocco, and a member of TICLab.

His research interests include combinatorial optimization, operations research, mathematical control, computational mathematics, intelligent optimization, with applications to logistics, transportation, robotics, and sustainable systems.
\end{IEEEbiography}
\end{document}